\documentclass[12pt,leqno]{article}
\usepackage{amsmath,amssymb,amsbsy,amsfonts,amsthm,latexsym,
         amsopn,amstext,amsxtra,euscript,amscd,amsthm,mathdots,hyperref}

\allowdisplaybreaks

\numberwithin{equation}{section}

\newtheorem{lem}{Lemma}

\newtheorem*{lem*}{Lemma}

\newtheorem{prop}{Proposition}

\newtheorem{thm}{Theorem}

\newtheorem*{thm*}{Theorem}

\newtheorem{cor}{Corollary}

\newtheorem{conj}{Conjecture}

\newtheorem*{conj_abc}{The $abc$ conjecture}
\newtheorem*{conj_a}{Conjecture A}

\newtheorem{prob}{Problem}

\newtheorem{rem}{Remark}

\begin{document}

\begin{large}
\centerline{\Large \bf The $abc$ Conjecture Revisited}
\end{large}
\vskip 10pt
\begin{large}
\centerline{\sc  Patrick Letendre}
\end{large}
\vskip 10pt
\begin{abstract}
We propose a new abc-type conjecture. We motivate the conjecture and illustrate its relevance through several applications. Our main result concerns the function
$$
W(x,y) := \sum_{j = 1}^{y}\omega(x+j) \quad (y \in \mathbb{N},\ x \in \mathbb{Z}_{\ge 0})
$$
where $\omega(n)$ denotes the number of distinct prime divisors of $n$. The new conjecture implies that, for each fixed $y \in \mathbb{N}$,
$$
\limsup_{x \to \infty} \frac{W(x,y)\log\log x}{\log x} = 1.
$$ 
\end{abstract}
\vskip 10pt

\noindent AMS Subject Classification numbers: 11N25, 11N37, 11N56.

\noindent Key words: abc conjecture, number of prime factors, short intervals, Mason–Stothers theorem.

\section{Introduction and notation}

For each integer $n \ge 1$, we consider the kernel function
$$
\gamma(n) := \prod_{p \mid n}p.
$$
The function $\gamma(n)$ is often called the radical of $n$. We recall the statement of the conjecture underlying this article.
\begin{conj_abc}
For every $\epsilon > 0$, there exists a constant $C(\epsilon)$ such that for all $(a,b,c) \in \mathbb{N}^{3}$ with $\gcd(a,b,c)=1$ and $a+b=c$,
$$
c < C(\epsilon)\gamma(abc)^{1+\epsilon}.
$$
\end{conj_abc}
The $abc$ conjecture, also called Oesterlé-Masser conjecture (see \cite{jo}), is a very powerful statement in number theory. It was inspired by the famous Mason–Stothers theorem; see \cite{rcm,wws}. For each nonzero $p \in \mathbb{C}[z]$, we define the radical
$$
\mathrm{Rad}(p) := \prod_{\substack{\rho \\ p(\rho)=0}} (z-\rho).
$$
\begin{thm*}[Mason–Stothers]
Let $a,b,c \in \mathbb{C}[z]$ be coprime polynomials with $a+b=c$ and $\deg(abc)\ge1$. Then
\begin{equation}\label{mason}
\max(\deg a,\deg b, \deg c) \le \deg \mathrm{Rad}(abc)-1.
\end{equation}
\end{thm*}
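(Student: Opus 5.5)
The plan is the classical Wronskian argument. We may assume $a,b,c$ are not all constant; since they are coprime and $a+b=c$, at most one of them is constant. Write $n_0(p)$ for the number of distinct roots of $p$, so that $\deg \mathrm{Rad}(p) = n_0(p)$. Because $a,b,c$ are pairwise coprime (a common factor of two of them divides the third), we have
$$
\deg \mathrm{Rad}(abc) = n_0(a)+n_0(b)+n_0(c).
$$
By symmetry it suffices to bound $\deg c$: the relation $a+b=c$ can be rewritten as $(-a)+c=b$ or $(-b)+c=a$, which permutes the roles of $a,b,c$. We may also assume $a$ and $b$ are not both constant.

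The first step is to introduce the Wronskian $W := ab' - a'b$. Differentiating $a+b=c$ gives $a'+b'=c'$, and substituting yields
$$
W = ac' - a'c = cb' - c'b .
$$
Next we check that $W \neq 0$. If $W=0$, then $(a/b)'=0$, so $a$ is a constant multiple of $b$. Coprimality then forces both $a$ and $b$ to be constant, and hence $c$ is constant too, contradicting $\deg(abc)\ge 1$.

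The key step is a divisibility. Fix a root $\rho$ of $abc$, and say it is a root of $a$ of multiplicity $m$. Then $(z-\rho)^{m-1}$ divides both $a$ and $a'$, so it divides $W=ab'-a'b$. The same holds for roots of $b$, using $W=ab'-a'b$, and for roots of $c$, using $W=ac'-a'c$. By pairwise coprimality these prime-power factors are coprime to one another, so
$$
\frac{abc}{\mathrm{Rad}(abc)} \;\Big|\; W .
$$
Comparing degrees gives
$$
\deg a+\deg b+\deg c - \deg\mathrm{Rad}(abc) \le \deg W \le \deg a+\deg b-1 ,
$$
where the last inequality holds because the leading terms of $ab'$ and $a'b$ each have degree at most $\deg a+\deg b-1$. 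This is still valid when one of $a,b$ is constant, since then one term vanishes. Rearranging yields $\deg c\le \deg\mathrm{Rad}(abc)-1$, and the symmetry noted above then bounds $\deg a$ and $\deg b$ as well.

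The main obstacle is only the bookkeeping of the divisibility step: each root must be attributed to exactly one of $a,b,c$, and for each one we must choose a form of $W$ that contains that polynomial together with its derivative. Pairwise coprimality handles the attribution, and the three expressions for $W$ supply the forms.
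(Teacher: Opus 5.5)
Your proof is correct and follows the same Wronskian argument the paper recalls before constructing its polynomials: $\Delta = ab'-a'b \neq 0$ by coprimality, $\frac{abc}{\mathrm{Rad}(abc)} \mid \Delta$, a degree comparison giving $\deg c \le \deg \mathrm{Rad}(abc)-1$, and symmetry via the other Wronskians. The only difference is that you supply details the paper leaves implicit, namely pairwise coprimality, the root-by-root check of the divisibility using the three equal forms of $\Delta$, and the case where one of $a,b$ is constant.
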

The analogy between the two is obvious. Various examples have shown that $\epsilon$ must be strictly positive and that $C(\epsilon)$ tends to infinity as $\epsilon$ tends to $0$. Thus, additional flexibility is necessary. For further background on the classical aspects of the $abc$ conjecture, we refer the interested reader to \cite{ab,ag:tjt,gm:wm,mw}. For a recent development, we refer to \cite{tb:jdl:jt}.

The analysis of the distribution of the values of $\gamma(n)$ for $n \le X$ has led several authors to further refine the function $C(\epsilon)\gamma(abc)^{1+\epsilon}$. The most precise statement to date in this direction is Conjecture A of \cite{or:cls:gt}.
\begin{conj_a}
There exists a real number $C_{1}$ such that, for all coprime positive integers $a$ and $b$, with $c=a+b$ and $\gamma=\gamma(abc)$,
$$
c < \gamma \exp\left(4\sqrt{\frac{3\log \gamma}{\log\log \gamma}}\left(1+\frac{\log\log\log \gamma}{2\log\log \gamma}+\frac{C_{1}}{\log\log \gamma}\right)\right).
$$
Furthermore, there exists a real number $C_{2}$ and infinitely many pairs of coprime positive integers $a$ and $b$ for which
$$
c > \gamma \exp\left(4\sqrt{\frac{3\log \gamma}{\log\log \gamma}}\left(1+\frac{\log\log\log \gamma}{2\log\log \gamma}+\frac{C_{2}}{\log\log \gamma}\right)\right).
$$
\end{conj_a}
In this paper, we consider a different direction. There is no doubt that the degree of the radical is a remarkable object in the polynomial setting, thanks to the Mason–Stothers theorem. Of course, the same cannot be said in the case of integers.

This line of thought leads us to the idea of finding a stronger conjecture than the $abc$ conjecture. One may try to use the same kind of heuristic arguments that are commonly used to motivate the $abc$ conjecture, but with a function carrying more arithmetic information. The interest lies in better understanding the boundary of what is possible for the factorization of $a+b$, in terms of that of $a$ and $b$, distinguishing between finitely many and infinitely many occurrences, and extracting as much information as possible from it.

We begin with a definition
$$
H(n):=\frac{\gamma(n)}{(\log\gamma(n))^{\omega(n)}} \quad (n \in \mathbb{N})
$$
where $\omega(n)$ denotes the number of distinct prime factors of $n$. The convention $0^{0}=1$ is used when $n=1$.
\begin{conj}\label{conj:1}
For every $\epsilon > 0$, there exists a constant $C(\epsilon)$ such that for all $(a,b,c) \in \mathbb{N}^{3}$ with $\gcd(a,b,c)=1$ and $a+b=c$,
$$
c < C(\epsilon)H(abc)^{1+\epsilon}.
$$
\end{conj}
The following result follows from the fact that every admissible triple $(a,b,c) \neq (1,1,2)$ satisfies $H(abc) < \gamma(abc)$.
\begin{prop}\label{prop:1}
Conjecture \ref{conj:1} implies the $abc$ conjecture.
\end{prop}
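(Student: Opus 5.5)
The plan is to show that $H(abc)\le\gamma(abc)$ for every admissible triple $(a,b,c)$. Granting this, Conjecture \ref{conj:1} gives, for each $\epsilon>0$,
$$
c< C(\epsilon)H(abc)^{1+\epsilon}\le C(\epsilon)\gamma(abc)^{1+\epsilon},
$$
which is exactly the $abc$ conjecture with the same constant $C(\epsilon)$.

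To compare $H(n)$ with $\gamma(n)$, write $n=abc$. Since $a+b=c$ with $\gcd(a,b,c)=1$ forces $c\ge 2$, we have $n>1$ and hence $\omega(n)\ge1$. By definition
$$
H(n)=\frac{\gamma(n)}{(\log\gamma(n))^{\omega(n)}},
$$
so $H(n)<\gamma(n)$ holds precisely when $\log\gamma(n)>1$, that is, when $\gamma(n)\ge3$. In that case the denominator exceeds $1$ because $\omega(n)\ge1$.

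It remains to treat $\gamma(abc)\le 2$, so that $abc$ is a power of $2$. Then $a$, $b$ and $c$ are all powers of $2$. Coprimality together with $a+b=c$ forces $\min(a,b)=1$, and $1+2^k=2^m$ is possible only for $k=0$, $m=1$. Hence the only such triple is $(1,1,2)$.

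For $(1,1,2)$ we have $\gamma=2$ and $\omega=1$, so $H(2)=2/\log2>2=\gamma$. The inequality therefore fails here, and this single triple must be handled separately. It is harmless: the $abc$ conjecture needs only $2<C'(\epsilon)\cdot 2^{1+\epsilon}$, which holds once we replace $C(\epsilon)$ by $\max(C(\epsilon),1)$. Alternatively, the constant from Conjecture \ref{conj:1} already works for every other triple, and enlarging it to cover one exceptional case changes nothing. The only real care needed is this exceptional case and the check that $\gamma\ge3$ otherwise; the rest is immediate.
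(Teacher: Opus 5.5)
Your proof is correct and follows the paper's route: the paper justifies the proposition in one line, by noting that every admissible triple other than $(1,1,2)$ satisfies $H(abc)<\gamma(abc)$. You verify exactly this fact (via $\gamma(abc)\ge 3$ and $\omega(abc)\ge 1$), and your replacement of $C(\epsilon)$ by $\max(C(\epsilon),1)$ makes explicit how the exceptional triple $(1,1,2)$ is absorbed, a detail the paper leaves implicit.
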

The function $H(n)$ can take values much smaller than 1.
\begin{prop}\label{prop:2}
For $k \ge 0$, we have
$$
\min_{\substack{n \ge 1 \\ \omega(n)=k}} H(n) = e^{-k(1+o(1))}.
$$
\end{prop}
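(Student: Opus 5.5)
We must show that $\min_{\omega(n)=k} H(n) = e^{-k(1+o(1))}$ as $k\to\infty$ (for bounded $k$ the statement is vacuous up to the $o(1)$). Since $H(n)$ depends only on $\gamma(n)$, we restrict to squarefree $n=p_1\cdots p_k$ with distinct primes. Writing $L=\log n=\sum_i\log p_i$, we have
$$
\log H(n) = L - k\log L .
$$
So the task reduces to minimizing $f(L)=L-k\log L$ over the achievable values of $L$.

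\textbf{Lower bound.} As a function of the real variable $L>0$, $f$ is convex with minimum at $L=k$, where $f(k)=k-k\log k$. That gives only $H(n)\ge (e/k)^k$, which is far smaller than $e^{-k}$, so the real-variable bound is not enough. The point is that $L$ cannot be as small as $k$: $L\ge \sum_{i\le k}\log p_i=\theta(p_k)$, where $p_k$ is the $k$-th prime. By the prime number theorem, $\theta(p_k)=p_k(1+o(1))=k\log k\,(1+o(1))$. For $L\ge k$, $f$ is increasing, so the minimum over admissible $n$ is attained at $n=p_1\cdots p_k$ (the primorial). Indeed, $\log n\ge\theta(p_k)>k$ for large $k$, and replacing any prime set by the first $k$ primes can only decrease $L$. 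Hence
$$
\min_{\omega(n)=k} \log H(n) = \theta(p_k)-k\log\theta(p_k).
$$

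\textbf{Evaluation.} This is the main obstacle. We need $\theta(p_k)-k\log\theta(p_k) = -k(1+o(1))$, i.e.\ an asymptotic to precision $o(k)$, which requires second-order terms. Known expansions (Cipolla, or Rosser--Schoenfeld type) give
$$
p_k=k(\log k+\log\log k-1+o(1)),\qquad \theta(p_k)=p_k+O\!\left(p_k e^{-c\sqrt{\log p_k}}\right) = k(\log k+\log\log k-1+o(1)).
$$
Then
$$
\log\theta(p_k)=\log k+\log\log k+\log\!\left(1+\frac{\log\log k-1+o(1)}{\log k}\right)=\log k+\log\log k+o(1),
$$
so $k\log\theta(p_k)=k\log k+k\log\log k+o(k)$. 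Subtracting,
$$
\theta(p_k)-k\log\theta(p_k) = -k+o(k),
$$
which gives $H=e^{-k(1+o(1))}$.

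The delicate step is confirming that the $-1$ in the second-order term of $p_k$ survives into $\theta(p_k)$ with error $o(k)$. The prime number theorem with the classical error term makes $\theta(x)-x=O(x/\log^2 x)=o(k)$ at $x=p_k$, and the expansion of $p_k$ follows from inverting $\pi(x)=x/\log x+x/\log^2x+O(x/\log^3x)$. I would cite these standard facts rather than rederive them. Finally, for $k=0$ the convention $0^0=1$ gives $H(1)=1=e^{0}$, consistent with the statement.
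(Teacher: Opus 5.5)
Your proposal is correct and follows essentially the same route as the paper: you reduce to squarefree $n$, use that $L\mapsto L-k\log L$ (equivalently $z\mapsto z/(\log z)^{k}$ for $z\ge e^{k}$) is increasing to place the minimum at the primorial $\prod_{i\le k}p_i$, and then evaluate $\theta(p_k)-k\log\theta(p_k)$ via the prime number theorem and the asymptotic expansion of $p_k$. The only difference is precision: the paper keeps the next term $\frac{\log\log k-2}{\log k}$ in the expansion of $p_k$ and uses $\theta(x)=x+O(x/(\log x)^{3})$ to obtain $-k-\frac{k}{\log k}+O\bigl(\frac{k(\log\log k)^{2}}{(\log k)^{2}}\bigr)$, which it reuses in the proof of Proposition~\ref{prop:3}, whereas your $-k+o(k)$ is all the statement itself requires.
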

The next result is the analogue of Theorem 2 in \cite{cls:rt}. 
\begin{prop}\label{prop:3}
There exist infinitely many triples $(a,b,c) \in \mathbb{N}^{3}$ satisfying $a+b=c$ with $\gcd(a,b,c)=1$ and
$$
c > H \exp\Bigl(\frac{2\log H}{\log\log H}\Bigr),
$$
where $H=\max(H(abc),e^{e})$.
\end{prop}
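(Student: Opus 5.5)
The plan is to imitate the Dirichlet-box (pigeonhole) construction that Stewart and Tijdeman use for their Theorem~2, but to tune it so that we profit from the denominator $(\log\gamma)^{\omega}$ in $H$ rather than from a small radical alone. The starting point is the identity
$$
\log\frac{c}{H(abc)}=\log\frac{c}{\gamma(abc)}+\omega(abc)\log\log\gamma(abc).
$$
So it suffices to produce triples in which $\gamma(abc)$ is not much larger than $c$ and $\omega(abc)$ is large. Here "large" means of order $\log c/\log\log c$, which makes the second term of size $\asymp \log c$ and comfortably beats the target $2\log H/\log\log H$, provided the first term is not too negative. Note that $\log H\le\log\gamma$, and that $\log H$ and $\log\gamma$ differ only by $\omega\log\log\gamma$, so the right-hand side of the claimed inequality can be controlled by $\log\gamma$ up to lower-order terms.

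For the construction, let $y$ be a large parameter and split the primes $3\le p\le y$ into two disjoint sets $\mathcal{P}_1,\mathcal{P}_2$ of roughly equal size $k/2$, with $k\sim y/\log y$. For a second parameter $u$, let $S_i$ be the set of integers $\le e^{u}$ divisible by every prime of $\mathcal{P}_i$ and composed only of such primes. Writing integers as $\prod p^{e_p}$ with $e_p\ge1$, a lattice-point count gives
$$
\log|S_i|\approx \tfrac{k}{2}\bigl(\log u-\log k+1\bigr)-\sum_{p\in\mathcal{P}_i}\log\log p .
$$
Take $D=2^{t}$ with $2^{t}$ slightly less than $|S_1|\cdot|S_2|$ (or, more simply, a single set and differences). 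By pigeonhole there are $a\in S_1$ and $c\in S_2$ with $a\equiv c\pmod D$ and $a\ne c$; after swapping we may assume $c>a$ and put $b=c-a$. Since $\mathcal{P}_1\cap\mathcal{P}_2=\emptyset$ and both sets are odd, we get $\gcd(a,c)=1$, hence $\gcd(a,b,c)=1$. Then $D\mid b$, so
$$
\gamma(abc)\le \gamma(a)\gamma(c)\cdot 2\,\frac{b}{D}\le \frac{2\,e^{\theta(y)}\,c}{D},
\qquad
\omega(abc)\ge k .
$$
Inserting these into the identity above and using $\theta(y)=y+o(y/\log y)$ and $\pi(y)=y/\log y+(1+o(1))y/\log^{2}y$ gives a lower bound for $\log(c/H)$ in terms of $y$, $u$ and $\log D$. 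I must still check that $c$ is of size $e^{u}$ up to a harmless factor. One way is to restrict to elements of $S_i$ in $[e^{u-1},e^{u}]$, which changes the count only negligibly. If the resulting $c$ stays bounded, I would vary $y$ to obtain infinitely many distinct triples.

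The main obstacle is the final optimisation: choosing $u$ as a function of $y$ so that the bound for $\log(c/H)$ beats $2\log H/\log\log H$, with the exact constant $2$. A rough bookkeeping shows a delicate balance. The cost $\theta(y)\approx y$ of the radicals of $a$ and $c$ is nearly cancelled by the gain $k\log\log\gamma\approx y\log u/\log y$. What remains is governed by $\log D$, of order $k\log(u/y)$, against the target $\asymp u/\log u$. I would first try $u=y\,g(y)$ with $g$ slowly growing and expand everything to second order in $1/\log y$. If the constant $2$ does not come out, the fallback is to replace the power of $2$ by a product of powers of the small primes reserved for $b$. This also lets $\omega(b)$ contribute to $\omega(abc)$, and the gain $\omega(b)\log\log\gamma$ should supply the missing secondary term.
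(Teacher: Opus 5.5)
There is a genuine gap. The proposal does not yet contain a proof, and two load-bearing steps fail.

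\textbf{The pigeonhole step.} From $D<|S_1|\,|S_2|$ nothing forces a residue class modulo $D$ to meet both $S_1$ and $S_2$; pigeonhole only produces two congruent elements of a single set. The single-set variant you mention (take $m_1\equiv m_2 \pmod D$ in one set and divide by $\gcd(m_1,m_2)$) cancels every prime on which the exponents agree. That destroys the bound $\omega(abc)\ge k$, which is the whole point of your construction.

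\textbf{The optimisation you defer.} This is not a technicality. Your own bounds give
$$
\log(c/H)\ge \log D+\pi(y)\log\theta(y)-\theta(y)\approx \log D+k,
$$
while the target is about $2u/\log u$. For $u=y\,g(y)$ with $g\to\infty$ this is $k(\log g+O(1))$ against a target of order $gk$, so the regime you propose fails at leading order. Since $c\ge\gamma(c)$ forces $u\gtrsim y/2$, the only possible regime is $u\asymp y$. There the outcome is decided by terms of exact order $k$, and your volume formula for $|S_i|$ is itself off by a factor $e^{\asymp k}$: a simplex whose side lengths are comparable to its dimension has far more lattice points than its volume.

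\textbf{Controlling the target.} $\log H$ and $\log\gamma$ do \emph{not} differ by a lower-order quantity: here $\omega\log\log\gamma\approx\theta(y)\approx\log c$. Bounding the target through $\gamma$ would give about $4\log c/\log\log c$, which is out of reach. You must use $H<c$ and compare with $2\log c/\log\log c$.

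\textbf{The missing idea.} Neither a congruence condition nor any saving on the radical is needed. The paper takes $N=\prod_{p\le 2\log A}p$, splits it as $N=ab$ with $a,b=N^{1/2}(\log A)^{O(1)}$, and sets $c=a+b$; coprimality is automatic because $N$ is squarefree. Using $\gamma(c)\le c$, $\log\gamma(abc)\ge\log N$ and $\omega(abc)\ge\omega(N)$ gives $H(abc)\le c\,H(N)$. The primorial computation behind Proposition~\ref{prop:2} gives $-\log H(N)=\omega(N)+\omega(N)/\log\omega(N)+\cdots$. Since $\log c\approx\frac12\log N$, this becomes
$$
-\log H(N)=\frac{2\log c}{\log\log c}+\frac{(4-2\log 2)\log c}{(\log\log c)^2}+\cdots,
$$
so $H<c\exp(-2\log c/\log\log c)$. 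Then $H<c$ and the monotonicity of $x\mapsto\log x/\log\log x$ on $[e^e,\infty)$ finish the proof.

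The near-cancellation between $\pi\log\theta$ and $\theta$ that you noticed is exactly the mechanism. It should be harvested from $ab$ of size about $c^2$, i.e.\ primes up to $2\log c$, which yields the gain $2\log c/\log\log c$ directly. Your construction instead uses primes up to about $\log c$ and tries to make up the other half with a $2$-adic saving.
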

Our main result concerns the function
$$
W(x,y) := \sum_{j = 1}^{y}\omega(x+j) \quad (y \in \mathbb{N},\ x \in \mathbb{Z}_{\ge 0}).
$$
\begin{thm}\label{thm:1}
Let $\delta > 0$ be fixed. Assuming Conjecture \ref{conj:1}, we have
$$
W(x,y) \le (1+\delta)\frac{\log x}{\log\log x}
$$
for all $y \ge 1$ and all $x \ge x_{0}(\delta,y)$.
\end{thm}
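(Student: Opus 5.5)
The plan is to feed Conjecture \ref{conj:1} a single triple $a+b=c$ that comes from a polynomial identity over $\mathbb{Q}$ which is extremal for the Mason–Stothers theorem (equality in \eqref{mason}) and in which every linear form $z+j$, $1\le j\le y$, divides $abc$. The lower bound on $\omega(abc)$ then enters through the denominator $(\log\gamma)^{\omega}$ of $H$.

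\textbf{Step 1: the polynomial identity.} I would use a Belyi polynomial. Belyi's construction (the part for rational points) gives a nonconstant $f\in\mathbb{Q}[z]$ of some degree $N=N(y)$ with these properties: $f(-j)\in\{0,1\}$ for $j=1,\dots,y$, and every finite critical value of $f$ lies in $\{0,1\}$. For such $f$, the polynomials $f$ and $1-f$ together have exactly $N+1$ distinct roots, so $f+(1-f)=1$ attains equality in \eqref{mason}. Clearing denominators, I get $A,B\in\mathbb{Z}[z]$ and $D\in\mathbb{N}$ with $A+B=D$, $\deg A=\deg B=N$, $\prod_{j=1}^{y}(z+j)\mid AB$, and $\deg\mathrm{Rad}(AB)=N+1$. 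Write $\mathrm{Rad}(AB)=R_1\cdots R_s$ with the $R_i$ irreducible over $\mathbb{Q}$; these are the irreducible factors of $AB$ without multiplicity, and they include the $z+j$. This step needs care but should be standard.

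\textbf{Step 2: the triple.} For large $x$, put $g=\gcd(A(x),B(x))$. Then $g\mid D$, so $g$ is bounded in terms of $y$. Taking signs into account (for large $x$, one of $\pm A(x),\pm B(x)$ is the largest of the three), I obtain a coprime triple $(a,b,c)$ with $c\asymp x^{N}$ and $abc$ equal, up to bounded factors, to $A(x)B(x)D/g^{3}$. Hence
$$
\gamma(abc)\ll_y \prod_i |R_i(x)|\ll_y x^{N+1}.
$$
Moreover $\omega(abc)\ge W(x,y)-\pi(y)$, because the primes dividing $x+j$ for different $j$ coincide only when they are $\le y$. Also $\log\gamma(abc)\ge \log x$, since $x+1\mid abc$.

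\textbf{Step 3: apply the conjecture.} Conjecture \ref{conj:1} gives
$$
x^{N}\ll c< C(\epsilon)\Bigl(\frac{C' x^{N+1}}{(\log x)^{W-\pi(y)}}\Bigr)^{1+\epsilon}.
$$
Taking logarithms,
$$
(1+\epsilon)\,(W-\pi(y))\log\log x\le (1+(N+1)\epsilon)\log x+O_{\epsilon,y}(1).
$$
Since $N$ depends only on $y$, I choose $\epsilon=\delta/(2(N+1))$. This gives $W\le(1+\delta)\log x/\log\log x$ for $x\ge x_0(\delta,y)$.

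\textbf{Main obstacle.} Step 1 is where I expect the difficulty: producing, for arbitrary $y$, an identity that is exactly extremal (deficit $1$) and contains all the $z+j$. Any deficit larger than $1$ would only give the constant $k$ in place of $1$. If Belyi's theorem is not acceptable as a black box, I would construct $f$ explicitly by induction on the number of prescribed rational points. The idea is to compose with maps $z\mapsto \lambda z^{m}(1-z)^{n}$, which send two prescribed rational values to $0$ and a third to $1$ while keeping all critical values in $\{0,1\}$. I would also check that the bounded gcd and the constant $D$ affect only the $O(1)$ terms.
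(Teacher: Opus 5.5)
Your argument is correct and has the same skeleton as the paper's: specialize a Mason--Stothers-extremal identity that contains every $z+j$, control the gcd, apply Conjecture~\ref{conj:1}, and choose $\epsilon$ in terms of the degree. The genuine difference is where the extremal identity comes from.

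\textbf{The two constructions.} The paper builds the identity by hand. Solving \eqref{poly_id} by Cramer's rule with a Vandermonde determinant gives $P_y-Q_y=R_y$ with exponents $\binom{y-1}{j-1}$ and $\deg P_y=2^{y-2}$. The extra roots sit in the non-constant third term $R_y$, which the Mason--Stothers inequality then forces to be squarefree (Lemma~\ref{lem:7}).

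You instead take a Belyi polynomial, so the third term is the constant $D$ and the extra roots lie among the factors of $f$ and $1-f$. Your root count $n_0+n_1=N+1$ follows at once: $\deg f'=N-1$, and every critical point is a multiple root of $f$ or of $1-f$.

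The polynomial, rational-point version of Belyi's construction that you need does hold. Order three marked points $u<v<w$, send $u,w$ to $0,1$ by an affine map, then apply $\lambda z^m(1-z)^n$ with $v\mapsto m/(m+n)$. Keep $0$ and $1$ in the marked set at every stage so that earlier critical values are tracked.

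\textbf{What each route buys.} Your route makes the gcd control trivial ($g\mid D$, with no need for Remark~\ref{Gcd}) and treats $y=1$ uniformly. The price is a non-explicit, typically much larger degree $N(y)$ and reliance on Belyi's construction. The paper's identity is elementary and explicit. It also works verbatim for arbitrary shifts $a_1<\dots<a_k$ (Lemma~\ref{lem:6}), which is then reused for Theorem~\ref{thm:2}.

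\textbf{Two small repairs.}
\begin{enumerate}
\item The inference ``$\log\gamma(abc)\ge\log x$ since $x+1\mid abc$'' is invalid, because $\gamma(x+1)$ can be tiny. Use instead $\gamma(abc)\ge\prod_{i\le\omega(abc)}p_i$. Under the contrary assumption $W>(1+\delta)\log x/\log\log x$, this gives $\log\gamma(abc)\ge(1+\delta+o(1))\log x$ by \eqref{varpi}, exactly as the paper does.
\item The loss in passing from $W(x,y)$ to $\omega(abc)$ is $O_y(1)$ rather than $\pi(y)$. The prime $2$ alone may divide $\lceil y/2\rceil$ of the $x+j$, and primes dividing $g$ may disappear. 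This is harmless, and the paper makes the same simplification.
\end{enumerate}
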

The same ideas also yield the following result.
\begin{thm}\label{thm:2}
Let $\mathcal{A}:=\{a_{1},\dots,a_{k}\} \subset \mathbb{Z}$ be a fixed set of $k \ge 1$ distinct integers. Let also $\{n_{1},\dots,n_{k}\} \subset \mathbb{N}$ be a set of pairwise coprime integers. Consider the system of congruences
$$
(\mathcal{S}) \left \{ \begin{array}{cc}
x+a_{1} \equiv 0 & \pmod{n_{1}},\\
x+a_{2} \equiv 0 & \pmod{n_{2}},\\
\cdots & \cdots\\
x+a_{k} \equiv 0 & \pmod{n_{k}},
\end{array}\right.
$$
and denote by $x_{0}=x_{0}(n_{1},\dots,n_{k})$ the smallest positive solution of the system $\mathcal{S}$. Let $N:=n_{1} \cdots n_{k}$. Then, for every $0 < \delta < 1$, there exists a constant $C(\delta,\mathcal{A})$ such that for all such $N$, we have
$$
x_{0} \ge C(\delta,\mathcal{A})\Bigl(\frac{N}{H(N)}\Bigr)^{1-\delta}.
$$
\end{thm}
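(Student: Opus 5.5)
The plan is to specialise Conjecture \ref{conj:1} to a suitable additive relation among the integers $x_{0}+a_{i}$, in the same spirit as for Theorem \ref{thm:1}. Write $x_{0}+a_{i}=n_{i}m_{i}$ with $m_{i}\in\mathbb{N}$. For $x_{0}$ large this is possible; for small $x_{0}$ the bound holds trivially, after adjusting $C(\delta,\mathcal{A})$.

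The basic tool is a submultiplicativity estimate for $H$. Let $u=N$ and let $v$ be a cofactor. Since $\log\gamma(uv)\ge\log\gamma(u)$ and $\omega(uv)\ge\omega(u)$, we get
$$
H(uv)\le \frac{\gamma(u)\gamma(v)}{(\log\gamma(u))^{\omega(u)}}=H(u)\gamma(v),
$$
provided $\log\gamma(uv)\ge 1$. I will first check this inequality carefully, including the degenerate cases where $\log\gamma<1$. By Proposition \ref{prop:2}, $H$ is then only harmed by factors that are at most polynomial in $x_{0}$ and by bounded constants depending on $\mathcal{A}$.

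The case $k=2$ shows the mechanism. Take the relation $(x_{0}+a_{1})-(x_{0}+a_{2})=a_{1}-a_{2}$. After removing the gcd, which divides $a_{1}-a_{2}$ and is therefore bounded, this gives an admissible triple with $c\asymp x_{0}$ and
$$
H(abc)\ll_{\mathcal{A}} H(n_{1}n_{2})\gamma(m_{1}m_{2})\le H(N)\frac{(x_{0}+O(1))^{2}}{N}.
$$
Conjecture \ref{conj:1} then yields
$$
x_{0}\ll_{\epsilon} \Bigl(H(N)\frac{x_{0}^{2}}{N}\Bigr)^{1+\epsilon}.
$$
This is equivalent to $x_{0}^{1-2\epsilon}\gg (N/H(N))^{1+\epsilon}$, and choosing $\epsilon$ small in terms of $\delta$ gives the claim. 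For $k=1$ the statement is immediate. The smallest positive solution satisfies $x_{0}\ge n_{1}-|a_{1}|$, and $H(n_{1})\ge e^{-O(\omega(n_1))}$ takes care of the rest.

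For general $k$ the same computation shows what is needed. Suppose we have an identity $A+B=C$ in $\mathbb{Z}[x]$ with the following properties: $A$, $B$, $C$ are, up to constants, products of powers of the linear forms $x+a_{i}$; every $i$ occurs; the polynomials are pairwise coprime; and the maximal degree is $d$. Evaluating at $x_{0}$ gives a triple with $c\asymp x_{0}^{d}$. The radical of that triple is at most $H(N)\,x_{0}^{k}/N$ times a constant, since multiplicities do not enlarge $\gamma$. Conjecture \ref{conj:1} then gives $x_{0}^{k-d-O(\epsilon)}\gg (N/H(N))^{1+\epsilon}$, so one needs $d\ge (k-1)(1-\delta/2)$.

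By Mason–Stothers, (\ref{mason}), $d\le k-1$, so we need near-extremal identities whose roots lie in the prescribed set $-\mathcal{A}$. These do not exist exactly for arbitrary $\mathcal{A}$. The main obstacle is therefore to construct approximate substitutes. My first attempt would be a Thue–Siegel / Padé-type construction. With $F(x)=\prod_{i}(x+a_{i})$, I would find integer polynomials $P,Q$ of degree about $\eta m$ such that $P F^{m}-Q G^{m}$ has small degree, where $G$ is built from a shifted or partial product of the same forms. I would then treat the cofactors $P(x_{0})$, $Q(x_{0})$ and the remainder as a loss of $x_{0}^{O(\eta)}$ in the radical, and finally let $\eta\to0$ together with $\epsilon$. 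If this fails, the fallback is to use only the pairwise relations as in the case $k=2$ and to aggregate them over pairs. That route, however, only gives exponent $\tfrac12$ on $N/H(N)$ in general, so the polynomial-identity step is where the real work lies.
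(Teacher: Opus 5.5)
Your $k=2$ computation is essentially right and is exactly the paper's construction for $k=2$. For $k\ge3$, however, you never produce the identity the argument needs, and the obstacle you hit comes from your own framework. You require all three of $A,B,C$ to be products of powers of the forms $x+a_i$. Then the radical has degree $k$. Since losing two powers of $x_0$ already caps the exponent near $\frac12$, you would need an exact extremal identity with $d=k-1$. As you say, such identities do not exist in general: for $k=3$ a short case check shows they force $\mathcal{A}$ to be an arithmetic progression.

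The Pad\'e substitute cannot repair this. A remainder $PF^m-QG^m$ of degree $O(\eta m)$ is forbidden by Mason--Stothers itself. The radical of $PF^m\cdot QG^m$ has degree $O(k+\eta m)$, while $PF^m$ has degree about $km$; common factors divide the remainder and do not help. So the remainder has degree at least $km-O(\eta m)-2k+1$ and cannot be absorbed as a loss of $x_0^{O(\eta)}$. As you note, the pairwise fallback does not reach exponent $1-\delta$ either.

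The missing idea is that the third term need \emph{not} be supported on $-\mathcal{A}$: its contribution to the radical is simply bounded by its absolute value. What you need is an exact Mason--Stothers equality case $P-Q=R$, with $P,Q$ products of powers of the $z+a_j$ (every $j$ occurring) and $R$ arbitrary. With $d=\deg P$ one gets $\gamma(P(x_0)Q(x_0)R(x_0))\le\gamma(N)N^{-1}\prod_j(x_0+a_j)\,|R(x_0)|\ll_{\mathcal{A}}\gamma(N)x_0^{d+1}/N$, while $c\gg x_0^{d}$. That loses exactly one power of $x_0$, and Conjecture~\ref{conj:1} gives $x_0^{1+O(\epsilon)}\gg(N/H(N))^{1+\epsilon}$.

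Such identities exist for every $a_1<\dots<a_k$. You want $P'/P-Q'/Q=\mathrm{const}/\prod_j(z+a_j)$, which amounts to $\sum_j(m_j-n_j)a_j^s=0$ for $0\le s\le k-2$. By Cramer's rule and the Vandermonde determinant, $m_j-n_j$ is proportional to $(-1)^jD_j$, where $D_j=\prod_{r<s,\,r,s\ne j}(a_s-a_r)$. So the even-indexed forms go into $P$ and the odd-indexed ones into $Q$, with exponents $D_j/e$ (Lemma~\ref{lem:6}). The Wronskian $P'Q-PQ'=PR'-P'R$ is then a constant times $PQ/\prod_j(z+a_j)$, of degree $2d-k$, which forces $\deg R=d-k+1$. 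For example, $\{0,1,3\}$ gives $(z+1)^3-z^2(z+3)=3z+1$. Finally, $\gcd(P(x_0),Q(x_0))$ is bounded in terms of $\mathcal{A}$ (Remark~\ref{Gcd}).

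Two small points. In your $k=2$ case the outcome should read $x_0^{1+2\epsilon}\gg(N/H(N))^{1+\epsilon}$, not $x_0^{1-2\epsilon}$. Also, your bound $x_0\ge n_1-|a_1|$, like the reduction to large $x_0$ (which the paper makes too), tacitly needs $a_j\ge0$. For $\mathcal{A}=\{-5\}$ one has $x_0=5$ for every prime $n_1>5$, so the statement itself requires that restriction.
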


We conclude this section with an immediate corollary of Theorem \ref{thm:1}.

\begin{cor}\label{cor:1}
Assume that Conjecture \ref{conj:1} holds. Then, for each fixed $y \in \mathbb{N}$, we have
$$
\limsup_{x \to \infty} \frac{W(x,y)\log\log x}{\log x} = 1.
$$
\end{cor}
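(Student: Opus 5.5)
The corollary has an upper half and a lower half. The upper bound comes directly from Theorem \ref{thm:1}; the lower bound is unconditional and comes from a Chinese remainder construction.

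For the upper half, fix $y$ and $\delta>0$. Theorem \ref{thm:1} gives $W(x,y)\log\log x/\log x \le 1+\delta$ for all $x\ge x_0(\delta,y)$. Hence the $\limsup$ is at most $1+\delta$, and letting $\delta\to 0$ shows it is at most $1$.

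For the lower half we must find infinitely many $x$ with $W(x,y)\ge (1-o(1))\log x/\log\log x$. Since $W(x,y)\ge \omega(x+1)$, it suffices to consider $x=P_k-1$, where $P_k=p_1\cdots p_k$ is the $k$-th primorial. Then $\omega(x+1)=k$. By the prime number theorem, $\log x\sim \log P_k=\theta(p_k)\sim p_k$, and $k=\pi(p_k)\sim p_k/\log p_k\sim \log x/\log\log x$. Therefore
$$
\frac{W(x,y)\log\log x}{\log x}\ \ge\ \frac{k\log\log x}{\log x}\ \longrightarrow\ 1
$$
along this sequence. This is the classical fact that the normal maximal order of $\omega(n)$ is $\log n/\log\log n$. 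Combining the two halves gives the limit superior equal to $1$.

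The only step needing any care is the asymptotic $k\sim\log P_k/\log\log P_k$, which follows from $\theta(t)\sim t$ and $\pi(t)\sim t/\log t$. I don't expect a real obstacle. If one preferred to use all $y$ terms, one could distribute the first $k$ primes among the $y$ shifts via the Chinese remainder theorem with $x<P_k$, but this is unnecessary for the lower bound.
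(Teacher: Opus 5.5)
Your proof is correct and follows the same two-part structure as the paper: the upper bound is Theorem~\ref{thm:1} with $\delta\to 0$, and the lower bound is the unconditional construction that the paper only alludes to (``Chinese remainder theorem and the prime number theorem''). Your choice $x=p_1\cdots p_k-1$, which puts all of the first $k$ primes into the single term $x+1$, is the simplest instance of that construction and makes the Chinese remainder step unnecessary.
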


The lower bound $\limsup_{x \to \infty} \frac{W(x,y)\log\log x}{\log x} \ge 1$ follows from a standard construction using the chinese remainder theorem and the prime number theorem. The case $y=2$ was considered by Erd\H{o}s and Nicolas in \cite{pe:jln}, Section 3.

Throughout the paper, $c_{1}, c_{2}, \dots$ and $C_{1}, C_{2}, \dots$ denote constants whose values may vary from one occurrence to another. Any dependence on additional parameters will be indicated explicitly, for example by writing $C(\delta,y)$. Finally, $p_{i}$ denotes the $i$th prime number.

\section{Preliminary lemmas}

\begin{lem}\label{lem:1}
There exist constants $c_{1}$ and $c_{2}$ such that for all $k \ge 1$ and $X \ge 2$,
$$
|\{n \le X:\ \omega(n) = k\}| \le \frac{c_{1}X(\log\log X+c_{2})^{k-1}}{(k-1)!\log X}.
$$
\end{lem}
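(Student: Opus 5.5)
This is the classical Hardy–Ramanujan inequality, uniform in $k$. I would prove it by induction on $k$, in the form
$$
\pi_k(X):=|\{n \le X:\ \omega(n)=k\}| \le \frac{c_{1}X(\log\log X+c_{2})^{k-1}}{(k-1)!\log X},
$$
following Hardy–Ramanujan's original argument but counting distinct primes rather than prime factors with multiplicity. It is convenient to first bound the count $N_k(X)$ of integers $n\le X$ of the shape $n=m\,p^{a}$, where $p$ is the largest prime factor of $n$, $p\nmid m$, and $\omega(m)=k-1$. Every $n$ with $\omega(n)=k$ has exactly one such representation, so $\pi_k(X)\le N_k(X)$.

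\emph{Base case $k=1$.} Here $n$ is a prime power, so the count is $\pi(X)+O(\sqrt X)\le c_1X/\log X$ by Chebyshev's bound. This fixes the requirement on $c_1$.

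\emph{Inductive step.} Write $P(m)$ for the largest prime factor of $m$. Each $n$ with $\omega(n)=k+1$ can be written as $n=m p^{a}$ with $p>P(m)$ and $\omega(m)=k$. Since $p^{a}\le X/m$ and $p>P(m)$, the number of choices of $p^a$ given $m$ is at most
$$
\pi(X/m)+O\bigl(\sqrt{X/m}\bigr)\ll \frac{X/m}{\log (X/m)}.
$$
Because the representation is unique, I would rather use the standard symmetrisation. Every $n$ with $\omega(n)=k+1$ has $k+1$ representations $n=m\,p^{a}$ with $p^{a}\,\|\,n$ and $\omega(m)=k$, so
$$
k\,\pi_{k+1}(X)\le \sum_{p^{a}\le X}\pi_{k}(X/p^{a}),
$$
and the factor $k$ (or $k+1$) produces the factorial. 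The contribution of $p^a>\sqrt X$ is handled by swapping roles: then $m<\sqrt X$, and counting prime powers up to $X/m$ gives $\ll\sum_{m<\sqrt X}\frac{X}{m\log X}$. With a $1/k!$-type weight coming from the distinct-prime-factor structure this is acceptable, but I would rather restrict the symmetrisation to avoid the issue. The cleaner route is to count $n$ via its $k+1$ prime-power components and keep only representations with $p^{a}\le \sqrt X$. Such a representation exists unless $n$ has a prime-power component exceeding $\sqrt X$, and there is at most one such component. So each $n$ has at least $k$ representations with $p^a\le\sqrt X$, giving
$$
k\,\pi_{k+1}(X)\le \sum_{p^{a}\le \sqrt X}\pi_k(X/p^{a}).
$$
Now $\log(X/p^a)\ge\frac12\log X$ and $\log\log(X/p^a)\le\log\log X$. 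Applying the induction hypothesis, the right side is at most
$$
\frac{2c_1X(\log\log X+c_2)^{k-1}}{(k-1)!\log X}\sum_{p^a\le\sqrt X}\frac1{p^a}.
$$

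\emph{Main obstacle.} By Mertens, $\sum_{p^a\le\sqrt X}1/p^{a}\le\log\log X+C$. The trouble is the factor $2$ lost from $\log(X/p^a)\ge\frac12\log X$, which would compound to $2^k$. This is the real difficulty, and Hardy–Ramanujan resolve it by not discarding that factor. Instead I would keep the exact weight and use
$$
\sum_{p^a\le\sqrt X}\frac{1}{p^a\log(X/p^a)}\le\frac{\log\log X+c_2}{\log X}
$$
for a suitable constant $c_2$. This follows by partial summation from Mertens' theorem with error term $O(1/\log t)$: writing $\frac1{\log(X/t)}=\frac1{\log X}\bigl(1+O(\log t/\log X)\bigr)$ for $t\le\sqrt X$, the correction term is $\frac{1}{\log^2X}\sum_{p\le\sqrt X}\frac{\log p}{p}=O(1/\log X)$, which is absorbed into $c_2/\log X$. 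Inserting this bound into the sum, while keeping $(\log\log(X/p^a)+c_2)^{k-1}\le(\log\log X+c_2)^{k-1}$, gives
$$
\pi_{k+1}(X)\le\frac{c_1X(\log\log X+c_2)^{k}}{k!\log X},
$$
which closes the induction with the same $c_1$ and $c_2$. Small $X$ (for instance $X<e^{e}$) is handled by enlarging $c_1$ and $c_2$ so that the bound is trivial there.
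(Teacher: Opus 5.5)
Your proof is correct, and it is essentially the classical Hardy--Ramanujan induction: each $n\le X$ with $\omega(n)=k+1$ has at least $k$ prime-power components $p^a\le\sqrt X$, so $k\,\pi_{k+1}(X)\le\sum_{p^a\le\sqrt X}\pi_k(X/p^a)$, and this is combined with the bound $\sum_{p^a\le\sqrt X}\frac{1}{p^a\log(X/p^a)}\le\frac{\log\log X+c_2}{\log X}$. That is precisely the argument behind Hardy and Ramanujan's Lemma B, which the paper cites rather than reproves. In a write-up, drop the exploratory detours (the largest-prime-factor representation and the version losing a factor $2$), and state explicitly that $c_2>-\log\log 2$, so that $(\log\log(X/p^a)+c_2)^{k-1}\le(\log\log X+c_2)^{k-1}$ holds.
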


\begin{proof}
This is Lemma B of \cite{ghh:sr}.
\end{proof}

\begin{lem}\label{lem:2}
Let $C > 0$ be a fixed constant. Then, uniformly in $X \ge 2$,
$$
\pi(CX) = \frac{CX}{\log X}+(C-C\log C)\frac{X}{(\log X)^{2}}+O_{C}\Bigl(\frac{X}{(\log X)^{3}}\Bigr).
$$
\end{lem}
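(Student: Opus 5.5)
We will derive Lemma \ref{lem:2} from the prime number theorem with the de la Vallée Poussin error term. Written as an asymptotic expansion, it reads
$$
\pi(t) = \frac{t}{\log t}+\frac{t}{(\log t)^{2}}+O\Bigl(\frac{t}{(\log t)^{3}}\Bigr) \qquad (t \ge 2).
$$
This follows from $\pi(t)=\mathrm{li}(t)+O(t e^{-c\sqrt{\log t}})$ by integrating $\mathrm{li}(t)=\int_{2}^{t}\frac{du}{\log u}$ by parts twice. We apply it with $t=CX$, and then re-expand everything in powers of $1/\log X$.

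\textbf{Step 1: the range of $X$.} Since $C$ is fixed, we may assume $CX \ge 2$ and $\log X$ large in terms of $C$. For $X$ in a bounded range, depending on $C$, both sides of the claimed identity are $O_{C}(1)$, and so are $O_{C}(X/(\log X)^{3})$. The point $X=2$ is harmless, since $\log 2>0$.

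\textbf{Step 2: rewriting $\log(CX)$.} Put $L=\log X$ and $\lambda=\log C$, so that $\log(CX)=L+\lambda=L(1+\lambda/L)$. For $L\ge 2|\lambda|$ we expand
$$
\frac{1}{\log(CX)}=\frac{1}{L}-\frac{\lambda}{L^{2}}+O_{C}\Bigl(\frac{1}{L^{3}}\Bigr),
\qquad
\frac{1}{(\log(CX))^{2}}=\frac{1}{L^{2}}+O_{C}\Bigl(\frac{1}{L^{3}}\Bigr).
$$

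\textbf{Step 3: substitution.} Multiplying these expansions by $CX$ gives
$$
\frac{CX}{\log(CX)}+\frac{CX}{(\log(CX))^{2}}=\frac{CX}{L}+\frac{(C-C\log C)X}{L^{2}}+O_{C}\Bigl(\frac{X}{L^{3}}\Bigr).
$$
The remainder $O\bigl(CX/(\log(CX))^{3}\bigr)$ is $O_{C}(X/L^{3})$ because $\log(CX)\asymp_{C} L$. This is exactly the statement of the lemma.

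The only step needing care is the two-term expansion of $\pi(t)$ together with uniformity for small $X$. The expansion itself is standard, and Step 1 disposes of uniformity by absorbing bounded ranges of $X$ into the implied constant.
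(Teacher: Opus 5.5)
Your proposal is correct and follows the route the paper indicates: the paper's proof consists only of the remark that the lemma is a consequence of the prime number theorem. You supply the details by applying the two-term expansion $\pi(t)=\frac{t}{\log t}+\frac{t}{(\log t)^{2}}+O\bigl(\frac{t}{(\log t)^{3}}\bigr)$ at $t=CX$, re-expanding $1/\log(CX)$ in powers of $1/\log X$, and absorbing bounded ranges of $X$ into the $O_{C}$-constant.
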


\begin{proof}
This is a consequence of the prime number theorem.
\end{proof}

Following \cite{jmdk:nd,jmdk:fl}, we define the {\it index of composition} of an integer $n$ by
$$
\lambda(n) := \left\{\begin{array}{cl} 1 & \text{if } n=1,\\ \frac{\log n}{\log \gamma(n)} & \text{otherwise}.\end{array}\right.
$$

\begin{lem}\label{lem:3}
Uniformly in $X \ge 1$ and $t \ge 1$,
$$
|\{n \le X:\ \lambda(n) \ge t\}| \ll X^{1/t}e^{c_{1}\sqrt{\log X}}
$$
for some constant $c_{1} > 0$.
\end{lem}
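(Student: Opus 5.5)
The plan is to parametrize integers by their radical. Every $n \le X$ with $\lambda(n) \ge t$ factors uniquely as $n = \gamma(n)\, m$, where every prime of $m$ divides $\gamma(n)$. The condition $\lambda(n)\ge t$ says $\gamma(n) \le n^{1/t} \le X^{1/t}$. Hence
$$
|\{n \le X:\ \lambda(n) \ge t\}| \le \sum_{\substack{r \le X^{1/t}\\ r \text{ squarefree}}} |\{n \le X:\ \gamma(n)=r\}| \le X^{1/t}\max_{r \le X}|\{n \le X:\ \gamma(n)=r\}|.
$$
It therefore suffices to show that, uniformly in squarefree $r$, the number of $n\le X$ with $\gamma(n)=r$ is at most $e^{c_1\sqrt{\log X}}$. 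This count is bounded by the number of $n\le X$ whose prime factors all divide $r$. Equivalently, it is bounded by the number of exponent vectors $(e_1,\dots,e_k)$ with $\sum e_i\log q_i \le \log X$, where $q_1<\dots<q_k$ are the primes of $r$.

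For the second step, I would bound this count by Rankin's trick. For any $\sigma>0$,
$$
\#\{n\le X:\ p\mid n \Rightarrow p\mid r\} \le X^{\sigma}\prod_{p\mid r}(1-p^{-\sigma})^{-1}.
$$
The worst case for the product is when the primes of $r$ are the first $k$ primes, so I may assume $r=p_1\cdots p_k$. We may also restrict to $k \le \omega_{\max}(X) \ll \log X/\log\log X$, since only primes $\le X$ matter and the count is monotone in the prime set. I would take $\sigma = 1/\sqrt{\log X}$. Then $X^\sigma = e^{\sqrt{\log X}}$, and
$$
\log\prod_{p\le P}(1-p^{-\sigma})^{-1} \le \sum_{p\le P}\log\Bigl(\frac{1}{1-p^{-\sigma}}\Bigr).
$$
For small $\sigma$ one has $1-p^{-\sigma}\ge \tfrac12\min(1,\sigma\log p)$. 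Each term is therefore $\ll 1+\log\bigl(1/(\sigma\log p)\bigr)$, which is $\ll \log\log X$. Summing over the at most $\pi(P)$ primes gives a bound of order $k\log\log X$. That is too big if $k$ is as large as $\log X/\log\log X$, so this is where care is needed.

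The main obstacle is therefore balancing $\sigma$ against the number of primes. The fix I would try first is a direct combinatorial count rather than Rankin. Only primes $q\le X$ of $r$ can appear, and since $n\ge \prod_{i}q_i^{e_i}$ with $q_i\ge 2$, the exponent vectors satisfy $\sum_i e_i\log 2\le \sum e_i\log q_i\le\log X$. Split the primes of $r$ at $Y=e^{\sqrt{\log X}}$.

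For the at most $\pi(Y)$ small primes $p\le Y$, the exponents satisfy $e_p\le \log X/\log 2$. They contribute at most $(1+\log_2 X)^{\pi(Y)} = \exp\bigl(O(\pi(Y)\log\log X)\bigr)$. Here $\pi(Y)\log\log X \ll e^{\sqrt{\log X}}\log\log X/\sqrt{\log X}$, which is too large. So instead I apply Rankin only to the small primes, with $\sigma=1/2$. The product $\prod_{p\le Y}(1-p^{-1/2})^{-1}=\exp\bigl(O(\sqrt Y/\log Y)\bigr)$ is also too big, so Rankin must be applied with $\sigma$ tuned and the small primes restricted to $p\le (\log X)^2$, where there are $\ll \log^2X/\log\log X$ of them.

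Cleanest is to recall the classical bound for $\Psi$-type counts: the number of $n\le X$ composed of $k$ given primes is at most the number of lattice points in the simplex $\sum e_i\log q_i\le\log X$. This is $\le \prod_i\bigl(1+\log X/(\,i\log i)\bigr)$ after ordering by size. Taking logs and splitting the sum at $i\approx\sqrt{\log X}$ gives $\ll\sqrt{\log X}\log\log X$ from small $i$. It gives $\ll \sum_{i>\sqrt{\log X}}\log X/(i\log i)$ from large $i$, but the simplex constraint (at most $\log X/\log q$ primes can actually appear with positive exponent) cuts this to $O(\sqrt{\log X})$ up to logs. Absorbing the $\log\log X$ factors into $c_1\sqrt{\log X}$ (after slightly enlarging the exponent, or accepting the standard bound $\exp(O(\sqrt{\log X/\log\log X}))$ of de Bruijn) yields the claimed $X^{1/t}e^{c_1\sqrt{\log X}}$.
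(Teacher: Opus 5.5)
There is a genuine gap: your reduction inequality is valid, but the statement you reduce to is false. You bound the count by $X^{1/t}\max_{r}|\{n\le X:\gamma(n)=r\}|$, and then you need every fiber to have at most $e^{c_1\sqrt{\log X}}$ elements.

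\textbf{Counterexample to the fiber bound.} Take $r=\prod_{p\le y}p$ with $y=\frac15\log X$, so that $r=X^{1/5+o(1)}\le X^{1/t}$ for $1\le t\le 4$ and $X$ large. Every $n=r\prod_{p\le y}p^{e_p}$ with $e_p\in\{0,1,2\}$ has $\gamma(n)=r$ and $n\le r^{3}\le X$. So this single fiber has at least $3^{\pi(y)}=\exp\bigl((\tfrac{\log 3}{5}+o(1))\frac{\log X}{\log\log X}\bigr)$ elements. This is the same order as the uniform fiber bound $\exp\bigl(C\frac{\log X}{\log\log X}\bigr)$ in the proof of Proposition \ref{prop:4}, which is therefore sharp.

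Consequently, for $t=2$ say, your chain can give nothing better than $X^{1/2}\exp\bigl(c\frac{\log X}{\log\log X}\bigr)$. The difficulties you met with Rankin's trick, with splitting the primes at $Y$, and with the simplex count are not technical: the quantity you are bounding really is that large. For the same reason, no bound of the shape $\exp\bigl(O(\sqrt{\log X/\log\log X})\bigr)$ can hold for the maximum over $r$. Results of that shape, such as de Bruijn's, concern the average over $r$, and they are no longer available once you have passed to $\max_r$.

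\textbf{The missing idea.} Radicals with large fibers are rare, so you must keep the sum over $r$ and weight it. Since $\lambda(n)\ge t$ means $n^{1/t}/\gamma(n)\ge 1$, replace the indicator by this weight and apply Rankin to the whole sum rather than to each fiber. This is the paper's proof:
$$|\{n\le X:\lambda(n)\ge t\}|\le\sum_{p\mid n\Rightarrow p\le X}\Bigl(\frac{X}{n}\Bigr)^{1/t+\sigma}\frac{n^{1/t}}{\gamma(n)}=X^{1/t+\sigma}\sum_{p\mid n\Rightarrow p\le X}\frac{1}{n^{\sigma}\gamma(n)}.$$
The factor $1/\gamma(n)$ puts an extra $1/p$ into each Euler factor, which becomes $1+\frac{1}{p(p^{\sigma}-1)}$. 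Each prime then contributes at most $\frac{1}{\sigma p\log p}$, which is summable, giving $\kappa/\sigma$ with $\kappa=\sum_p\frac{1}{p\log p}$. By contrast, in a single fiber each prime of $r$ contributes about $\log\frac{1}{\sigma\log p}$, which is what produced your $k\log\log X$. Choosing $\sigma=\sqrt{\kappa/\log X}$ gives $X^{1/t}e^{2\sqrt{\kappa\log X}}$.
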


\begin{proof}
Clearly, we may assume that $X \ge 2$. We use Rankin's method. Let $\alpha,\beta \ge 0$ be parameters to be chosen later. We have
$$
\sum_{\substack{n \le X \\ n \ge \gamma(n)^{t}}}1  \le \sum_{p \mid n \Rightarrow p \le X}\Bigl(\frac{X}{n}\Bigr)^{\alpha}\Bigl(\frac{n}{\gamma(n)^{t}}\Bigr)^{\beta} = \sum_{p \mid n \Rightarrow p \le X}\frac{X^{\alpha}}{n^{\alpha-\beta}\gamma(n)^{t\beta}}.
$$
Set $\alpha = \frac{1}{t}+\sigma$ with $\sigma > 0$ and choose $\beta = \frac{1}{t}$. We obtain
\begin{eqnarray*}
\sum_{\substack{n \le X \\ n \ge \gamma(n)^{t}}}1  & \le & X^{1/t+\sigma}\sum_{p \mid n \Rightarrow p \le X}\frac{1}{n^{\sigma}\gamma(n)} = X^{1/t+\sigma}\prod_{p \le X}\Bigl(1+\frac{1}{p^{1+\sigma}}+\frac{1}{p^{1+2\sigma}}+\cdots\Bigr)\\
& = & X^{1/t+\sigma}\prod_{p \le X}\Bigl(1+\frac{1}{p(p^{\sigma}-1)}\Bigr) \le X^{1/t}\exp\Bigl(\sigma\log X+\sum_{p \le X}\frac{1}{p(p^{\sigma}-1)}\Bigr)\\
& \le & X^{1/t}\exp\Bigl(\sigma\log X+\frac{1}{\sigma}\sum_{p \le X}\frac{1}{p\log p}\Bigr) \le X^{1/t}\exp\Bigl(\sigma\log X+\frac{\kappa}{\sigma}\Bigr)\\
& = & X^{1/t}\exp\bigl(2\sqrt{\kappa\log X}\bigr).
\end{eqnarray*}
where we have set $\kappa:=\sum_{p}\frac{1}{p\log p}$ and chosen $\sigma := \sqrt{\kappa/\log X}$.
\end{proof}

For each integer $n \ge 1$, we define the function
$$
\vartheta(n) := \left\{\begin{array}{cl} 0 & \text{if } n=1,\\ \frac{\omega(n)\log\log n}{\log n} & \text{otherwise}.\end{array}\right.
$$
This function was introduced in \cite{jmdk:pl,pl}.

\begin{lem}\label{lem:4}
Uniformly in $X \ge 100$ and $0 < c \le 1$,
$$
|\{n \le X:\ \vartheta(n) \ge c\}| \ll X^{1-c+c_{1}\frac{\log\log\log X}{\log\log X}}
$$
for some constant $c_{1} > 0$.
\end{lem}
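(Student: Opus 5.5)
We plan to deduce the statement from Lemma \ref{lem:1} by summing over the admissible values of $k=\omega(n)$ and bounding the dyadic pieces in $n$. Write $L:=\log\log X$. Since the condition $\vartheta(n)\ge c$ depends on $n$ through $\log n$, we first dispose of small $n$. The $n\le X^{1-c}$ contribute at most $X^{1-c}$, which is admissible. For the remaining $n$, we split $(X^{1-c},X]$ into dyadic intervals $(Y/2,Y]$. On each interval, $\vartheta(n)\ge c$ forces
\[
\omega(n)\ge k_0(Y):=\Bigl\lceil \frac{c\log(Y/2)}{\log\log Y}\Bigr\rceil ,
\]
with harmless adjustments, using that $\log\log n$ is essentially constant there. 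There are $O(\log X)$ dyadic intervals, and this loss is absorbed into the error exponent, since $\log X = X^{L^{-1}\log\log X}$ is at most $X^{c_1\log\log\log X/\log\log X}$ once we note $\log\log X\le c_1\log\log\log X\cdot\frac{\log X}{\log X}$. More simply, $\log X=X^{(\log\log X)/\log X}$, which is far smaller than the allowed factor.

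On a fixed dyadic range we apply Lemma \ref{lem:1} with $X$ replaced by $Y$ and sum over $k\ge k_0$:
\[
\sum_{k\ge k_0}\frac{c_1Y(\log\log Y+c_2)^{k-1}}{(k-1)!\log Y}.
\]
The ratio of consecutive terms is $(\log\log Y+c_2)/k\le 1/2$ for $k\ge k_0$, because $k_0\gg \log Y/\log\log Y$ is huge compared to $\log\log Y$. Note that $c\ge 1-c+\dots$ is not needed here: when $c$ is so small that $k_0\lesssim \log\log Y$, the trivial bound $Y\le X^{1-c+c_1(\cdot)}$ already suffices, because $c\log X\ll(\log\log X)^2$ then makes $X^{c}$ smaller than $X^{c_1\log\log\log X/\log\log X}$. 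So the sum is dominated by its first term. By Stirling,
\[
\frac{(L+c_2)^{k_0}}{k_0!}\le \exp\bigl(k_0\log(L+c_2)-k_0\log k_0+k_0\bigr).
\]
With $k_0\approx c\log Y/\log\log Y$ we get $\log k_0=\log\log Y-\log\log\log Y+\log c$, so the exponent is
\[
-c\log Y+k_0\bigl(2\log\log\log Y-\log c+O(1)\bigr).
\]

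The main obstacle is the $-\log c$ term, which is not uniform as $c\to0$. We handle it as follows. Since $-k_0\log c\approx (c\log(1/c))\log Y/\log\log Y$, it suffices to have $c\log(1/c)\ll \log\log\log Y$, or else to be in the small-$c$ regime. If $c\ge 1/\log\log Y$, then $\log(1/c)\le\log\log\log Y$, and $c\log(1/c)\le \log\log\log Y$ holds. If $c<1/\log\log Y$, the trivial bound $|\{n\le X\}|=X\le X^{1-c+c_1\log\log\log X/\log\log X}$ holds directly.

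In the main regime the exponent is therefore at most $-c\log Y+O\bigl(\log Y\,\log\log\log Y/\log\log Y\bigr)$, giving
\[
Y^{1-c+O(\log\log\log Y/\log\log Y)}\le X^{1-c+c_1\log\log\log X/\log\log X}.
\]
Here we use that $Y\le X$ and that the function $u\mapsto \log\log\log u/\log\log u$ is eventually decreasing, which requires some care. Alternatively, we can write everything with $Y\in[X^{1-c},X]$, so that $\log\log Y=\log\log X+O(1)$ once $c\le1$ is bounded away from $1$; when $c$ is near $1$ the range $Y\ge X^{1-c}$ is small anyway and we use $Y^{\cdot}\le X^{\cdot}$ directly. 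Summing the $O(\log X)$ dyadic pieces then completes the proof.
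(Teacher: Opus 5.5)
Your argument is correct in substance, but it follows a different route from the paper's.

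\textbf{Comparison with the paper.} The paper does not invoke Lemma~\ref{lem:1} here. On each block $[z,ez]$ it uses Rankin's trick with two weights, bounding $1\le (ez/n)\,L^{\omega(n)-c\log z/\log\log z}$. It then extends the sum to all $n$ with $p\mid n\Rightarrow p\le ez$, which gives the Euler product $\prod_{p\le ez}(1+L/(p-1))$, controls that product by Mertens, and takes $L=\log z/(\log\log z)^2$ \emph{independently of $c$}. Since $c$ then enters only through $-c\log L\cdot\log z/\log\log z=-c\log z+2c\log z\,\log\log\log z/\log\log z$, uniformity in $c\in(0,1]$ is automatic. There is also no regime where a tail fails to be geometric, and the only input is Mertens' theorem.

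Your approach, summing Lemma~\ref{lem:1} over $k\ge k_0$ and applying Stirling, is in effect Rankin's method with the $c$-dependent parameter $L\approx k_0/\log\log Y$. It has the advantage of recycling a lemma already quoted in the paper. The price is the $\log(1/c)$ term and a separate small-$k_0$ case. That term is in fact harmless without any case split. With $\kappa=c\log(Y/2)/\log\log Y$ one has $\kappa\log(1/c)=c\log(1/c)\cdot\log(Y/2)/\log\log Y\le \log Y/(e\log\log Y)$, because $c\log(1/c)\le 1/e$.

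\textbf{Repairs needed in your write-up.} None of these is a real gap, but each should be fixed.

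The sentence justifying the loss from the $O(\log X)$ dyadic blocks is garbled. With your $L=\log\log X$ one has $L^{-1}\log\log X=1$, and $\log\log X\le c_1\log\log\log X$ is false. Keep only $\log X=X^{\log\log X/\log X}$.

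Your trivial-bound cases are triggered by conditions on the block ($k_0\lesssim\log\log Y$, or $c<1/\log\log Y$), yet you conclude with the global bound $X$ and the factor $X^{c}$. Argue per block instead: $Y=Y^{1-c}Y^{c}\le X^{1-c}e^{c\log Y}$. In the first case $c\log Y\ll(\log\log Y)^2$; in the second $c\log Y\le \log Y/\log\log Y\le\log X/\log\log X$. Either way the factor is within the allowed error.

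Finally, passing from $Y^{1-c+O(\log\log\log Y/\log\log Y)}$ to the bound in $X$ does not follow from $\log\log\log u/\log\log u$ being decreasing; that points the wrong way. What you need is that $\log u\cdot\log\log\log u/\log\log u$ is eventually increasing, with bounded $Y$ contributing $O(1)$. Your fallback remark that for $c$ near $1$ ``the range $Y\ge X^{1-c}$ is small'' is backwards: that range is then almost all of $[1,X]$.
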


\begin{proof}
We may assume that $X$ is sufficiently large. The function $z \to \frac{\log z}{\log\log z}$ is increasing for $z \ge e^{e}$. Let $L \ge 1$ be a parameter to be chosen later. We have
\begin{eqnarray*}
\sum_{\substack{z \le n \le ez \\ \vartheta(n) \ge c}}1 & \le & \sum_{\substack{p \mid n \Rightarrow p \le ez \\ \omega(n) \ge c\frac{\log z}{\log\log z}}}\frac{ez}{n} \le \frac{ez}{L^{c\frac{\log z}{\log\log z}}}\sum_{p \mid n \Rightarrow p \le ez}\frac{L^{\omega(n)}}{n} = \frac{ez}{L^{c\frac{\log z}{\log\log z}}}\prod_{p \le ez}\Bigl(1+\frac{L}{p-1}\Bigr)\\
& \le & z\exp\Bigl(1-c(\log L)\frac{\log z}{\log\log z}+L\sum_{p \le ez}\frac{1}{p-1}\Bigr).\\
\end{eqnarray*}  
We choose $L := \frac{\log z}{(\log\log z)^{2}}$, and note that $L > 1$ for $z \ge e^{e}$. Using Mertens' theorem (see \cite[p.~33]{gt}), we obtain
$$
\sum_{\substack{z \le n \le ez \\ \vartheta(n) \ge c}}1 \ll z^{1-c+c_{2}\frac{\log\log\log z}{\log\log z}} \quad (z \ge e^{3}).
$$
The result follows by summing over $z = e^{j}$ with $3 \le j \le \log X$.
\end{proof}

\section{Motivation for Conjecture \ref{conj:1}}

Let $n$ be an integer. We aim to measure the {\it rarity} of $n$ by counting how many integers of comparable size are as \lq\lq rare” as $n$. We want this notion of rarity to depend only on the factorization of $n$. We rely on two results, namely
$$
|\{X \le n < 2X:\ \lambda(n) \ge t\}| \ll X^{1/t+o(1)} \quad (t \ge 1)
$$
and
$$
|\{X \le n < 2X:\ \vartheta(n) \ge c\}| \ll X^{1-c+o(1)} \quad (0 < c \le 1),
$$
which follow from Lemma \ref{lem:3} and \ref{lem:4} respectively.

We can uniquely factor $n$ as $n=ab$ with $a$ squarefree, $b$ powerful and $\gcd(a,b)=1$. Thus, we are led to express the rarity of $n$ as $n^{r}$, where
\begin{eqnarray*}
r & = & \frac{\log a}{\log n}(1-\vartheta(a))+\frac{\log b}{\log n}\frac{1}{\lambda(b)}(1-\vartheta(\gamma(b)))\\
& = & \frac{\log(a\gamma(b))-\omega(a)\log\log a-\omega(b)\log\log\gamma(b)}{\log n}
\end{eqnarray*}
from which we deduce
$$
n^{r} = \frac{\gamma(n)}{(\log\gamma(a))^{\omega(a)}(\log\gamma(b))^{\omega(b)}}.
$$
This line of reasoning naturally leads us to the function $H(n)=\frac{\gamma(n)}{(\log\gamma(n))^{\omega(n)}}$.

Fix $\epsilon > 0$. We show in Proposition \ref{prop:4} that the function
$$
S(X,z) := |\{n \le X:\ H(n) \le z\}|
$$
satisfies
\begin{equation}\label{sxz}
S(X,z) \ll z X^{o(1)} \quad(X \to \infty).
\end{equation}
Thus, without loss of generality, we may assume that $a \in [A,2A)$ and $b \in [B,2B)$ with $A \le B$. There are therefore at most $AB$ integers of size about $AB^{2}$ of the form $ab(a+b)$ with $a \in [A,2A)$ and $b \in [B,2B)$. By \eqref{sxz}, the number of integers $n \asymp AB^{2}$ satisfying $H(n) < B^{1-\epsilon}$ is at most $B^{1-\epsilon+o(1)}$. The expected size of the intersection of these two sets is therefore, as $B \to \infty$,
$$
\ll AB^{2} \cdot \frac{AB}{AB^{2}} \cdot \frac{B^{1-\epsilon+o(1)}}{AB^{2}} = \frac{1}{B^{\epsilon-o(1)}}.
$$
Summing over dyadic intervals with $A \le B$, we conclude that the expected number of pairs $(a,b)$ satisfying $\max(a,b) \in [B,2B)$ and $H(ab(a+b)) < B^{1-\epsilon}$ is $\ll B^{-\epsilon/2}$. Summing again over dyadic intervals, we bound the total expected number of such pairs with large $b$ by an arbitrarily small quantity. We therefore conjecture that there are only finitely many such pairs, and that for all others one has $H(ab(a+b)) \ge B^{1-\epsilon}$. Hence,
$$
\max(a,b,c) < C\Bigl(\frac{\epsilon}{1-\epsilon}\Bigr) H(abc)^{\frac{1}{1-\epsilon}}.
$$
\begin{prop}\label{prop:4}
Uniformly for $X \ge 100$ and $1 \le z \le X$,
$$
S(X,z) \ll z\exp\Bigl(2\frac{(\log X-\log z)\log\log\log X}{\log\log X}+c_{1}\frac{\log X}{\log\log X}\Bigr)
$$
for some constant $c_{1} > 0$.
\end{prop}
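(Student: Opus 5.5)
We split the count according to the radical. For $n$ with $\omega(n)=k$, the condition $H(n)\le z$ says
$$
m:=\gamma(n)\le Y_k:=\min\bigl(X,\ z(\log X)^{k}\bigr),
$$
using $\log\gamma(n)\le\log X$ and $\gamma(n)\le n\le X$. Hence
$$
S(X,z)\le\sum_{k\ge 0}\ \sum_{\substack{m\le Y_k\\ \mu^2(m)=1,\ \omega(m)=k}} N(X,m),\qquad N(X,m):=|\{n\le X:\ \gamma(n)=m\}|.
$$
Also $k\le K:=(1+o(1))\log X/\log\log X$, since $m\ge p_1\cdots p_k=e^{k\log k(1+o(1))}$ and $m\le X$. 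I will bound the two factors separately, then sum over $k$.

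\emph{Step 1 (multiplicity).} I claim $N(X,m)\le\exp(c\log X/\log\log X)$ uniformly in $m$. By Rankin, for $\sigma>0$,
$$
N(X,m)\le\sum_{\gamma(n)=m}\Bigl(\frac{X}{n}\Bigr)^{\sigma}\le\Bigl(\frac{X}{m}\Bigr)^{\sigma}\prod_{p\mid m}\frac{1}{1-p^{-\sigma}}.
$$
The product is largest when $m$ is a primorial, with $k\le K$ prime factors all of size $\ll\log X$. Taking $\sigma\asymp 1/\log\log X$ should make both $X^{\sigma}$ and the product equal to $e^{O(\log X/\log\log X)}$. The crude bound $(\log X/\log 2)^{k}$ is not enough here, so this is the step I would check first. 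If Rankin falls short, the fallback is to reuse the Lemma \ref{lem:3} computation with $t$ close to $1$.

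\emph{Step 2 (counting radicals).} By Lemma \ref{lem:1}, the number of $m\le Y_k$ with $\omega(m)=k$ is at most
$$
\ll Y_k\,\frac{(\log\log X+c_2)^{k}}{k!}\le Y_k\exp\bigl(k\log\log\log X-k\log k+O(k)\bigr).
$$
Since $k\le K\ll\log X/\log\log X$, the $O(k)$ term and the loss from Step 1 are absorbed into $c_1\log X/\log\log X$. The terms with $k\le\log X/(\log\log X)^{2}$ are also harmless in this error.

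\emph{Step 3 (optimisation in $k$, the main point).} Set $k_0:=(\log X-\log z)/\log\log X$, so that $z(\log X)^{k_0}=X$. For the remaining $k$ we have $\log k=\log\log X-\log\log\log X+O(1)$, so $-k\log k=-k\log\log X+k\log\log\log X+O(k)$.
\begin{itemize}
\item If $k\le k_0$, then $Y_k=z(\log X)^{k}$, and the exponent becomes $\log z+2k\log\log\log X+O(k)\le\log z+2k_0\log\log\log X+O(k)$.
\item If $k>k_0$, then $Y_k=X=z(\log X)^{k_0}$, and the exponent is
$$
\log z+k_0\log\log X-k\log\log X+2k\log\log\log X+O(k)=\log z+2k_0\log\log\log X-(k-k_0)\bigl(\log\log X-2\log\log\log X\bigr)+O(k).
$$
The middle term is $\le 0$, so this is again at most $\log z+2k_0\log\log\log X+O(k)$.
\end{itemize}
Since $2k_0\log\log\log X=2(\log X-\log z)\log\log\log X/\log\log X$, every $k$ contributes at most the bound of Proposition \ref{prop:4}.

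\emph{Step 4 (summation).} Summing over the at most $K$ values of $k$ costs only a factor $\log X$, which is absorbed into the $c_1\log X/\log\log X$ term. This gives the stated bound.

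The delicate point is keeping the cancellation in Step 3 exact. In particular, the expansion of $\log k$ must hold uniformly for the $k$ near $K$ that dominate the sum, and all remaining errors must genuinely be $O(\log X/\log\log X)$.
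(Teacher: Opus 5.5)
Your proof is correct in substance, but it organizes the main count differently from the paper.

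The paper slices by the size of the radical, $m=\gamma(n)\in[e^{j-1},e^{j})$. It uses $m\le z(\log m)^{k}< z j^{k}$ to obtain a \emph{lower} bound $k>k_0(j)=\lfloor (j-1-\log z)/\log j\rfloor$. It then sums the tail of Lemma~\ref{lem:1} over $k>k_0(j)$ with Stirling, getting a per-scale bound $z\exp\bigl(2(j-\log z)\log\log j/\log j+O(j/\log j)\bigr)$, and finally sums over $j$.

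You instead slice by $k=\omega(m)$. You replace $\log m$ by $\log X$ once and for all, which gives the \emph{upper} bound $m\le Y_k=\min(X,z(\log X)^k)$, and then optimize a single expression in $k$. Your route avoids the scale decomposition and the final monotonicity-in-$j$ step. It loses nothing, because the dominant scale is $j\approx\log X$ anyway. The paper's route keeps per-scale information (smaller radicals contribute less), which the proposition does not need.

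Your Step 1 is the paper's Rankin argument with $\sigma$ fixed rather than optimized, and it does work. With $\sigma=1/\log\log X$, the bounds $\log\frac{1}{1-p^{-\sigma}}\le\frac{1}{\sigma\log p}$ and $\sum_{p\le p_K}\frac{1}{\log p}\ll\frac{\log X}{(\log\log X)^2}$ give $N(X,m)\le\exp\bigl(O(\log X/\log\log X)\bigr)$. The paper takes $\sigma=\sqrt{\alpha(m)/\log X}$ to the same effect.

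One step needs repair. In Step 3, the expansion $\log k=\log\log X-\log\log\log X+O(1)$ is false near the lower end $k\approx\log X/(\log\log X)^2$. The true error is $\log(K/k)$ with $K=\log X/\log\log X$, which can be as large as $\log\log\log X$. Using only $\log k\ge\log\log X-2\log\log\log X$ there would give the constant $3$ instead of $2$.

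The fix is to write $-k\log k=-k\log\log X+k\log\log\log X+k\log(K/k)$ exactly, and observe that $k\log(K/k)\le K/e$ for all $k>0$. This extra term is therefore $O(\log X/\log\log X)$ uniformly. Both of your cases, $k\le k_0$ and $k>k_0$, then go through as written, with the error $O(k)$ replaced by $O(\log X/\log\log X)$.
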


\begin{proof}
We may assume that $X$ is sufficiently large. It suffices to consider the case where $z \gg \exp\bigl(C_{1}\frac{\log X}{\log\log X}\bigr)$. We begin with a uniform bound in $m \le X$ for the number of integers $n \le X$ such that $\gamma(n)=m$. Let $\sigma \ge 0$ be a parameter to be chosen later. We have
\begin{eqnarray*}
\sum_{\substack{n \le X \\ \gamma(n)=m}}1 & = & \sum_{\substack{n \le X/m \\ p \mid n \Rightarrow p \mid m}}1 \le \sum_{\substack{n \\ p \mid n \Rightarrow p \mid m}}\Bigl(\frac{X}{mn}\Bigr)^{\sigma} = \Bigl(\frac{X}{m}\Bigr)^{\sigma}\prod_{p \mid m}\Bigl(1+\frac{1}{p^{\sigma}-1}\Bigr)\\
& \le & \exp\Bigl(\sigma\log X+\frac{1}{\sigma}\sum_{p \mid m}\frac{1}{\log p}\Bigr) = \exp\bigl(2\sqrt{\alpha(m)\log X}\bigr)\\
& \ll & \exp\Bigl(C_{2}\frac{\log X}{\log\log X}\Bigr).
\end{eqnarray*}
Here we choose $\sigma := \sqrt{\alpha(m)/\log X}$, where
$$
\alpha(m) := \sum_{p \mid m}\frac{1}{\log p}.
$$
We have also used the estimate
$$
\alpha(m) \le \sum_{p \ll \log m}\frac{1}{\log p} \ll \frac{\log m}{(\log\log m)^{2}} \quad (m \ge 10)
$$
which follows from the prime number theorem and partial summation.

We now consider the contribution of those $m=\gamma(n) \in [e^{j-1},e^{j})$ with $e^{j-1} > z\exp\bigl(\frac{\log X}{\log\log X}\bigr)$, since the contribution of the smaller values already satisfies the desired inequality. In this range, we have
$$
\frac{m}{(\log m)^{\omega(m)}} \le z,\ \text{which implies} \ \frac{e^{j-1}}{j^{\omega(m)}} < z.
$$
We partition the values of $m$ according to $\omega(m)=k$. The solutions satisfying the above inequality must satisfy
$$
k > k_{0} := \Bigl\lfloor\frac{j-1-\log z}{\log j}\Bigr\rfloor.
$$
The number of integers $m \le e^{j}$ with $\omega(m)=k$ is bounded by $\ll \frac{e^{j}(\log j+c)^{k-1}}{(k-1)!j}$ for some absolute constant $c$ (see Lemma \ref{lem:1}). The desired contribution is therefore
\begin{eqnarray*}
\sum_{k > \frac{j-1-\log z}{\log j}}\frac{e^{j}(\log j+c)^{k-1}}{(k-1)!j} & \ll & \frac{e^{j}}{j}\cdot\frac{(\log j+c)^{k_{0}}}{k_{0}!}\\
& \ll & \frac{e^{j}}{j\sqrt{k_{0}}}\cdot \Bigl(\frac{e\log j+ec}{k_{0}}\Bigr)^{k_{0}}\\
& \ll & e^{j}\cdot \exp\Bigl(\frac{j-1-\log z}{\log j}\cdot \log \frac{e(\log j)^{2}+ec\log j}{j-1-\log z -\log j}\Bigr)\\
& \ll & z\cdot \exp\Bigl(2\frac{(j-\log z)\log\log j}{\log j}+O\Bigl(\frac{j}{\log j}\Bigr)\Bigr).
\end{eqnarray*}
Summing over $1+\log z+\frac{\log X}{\log\log X} < j \le 1+\log X$, we obtain
$$
z\exp\Bigl(2\frac{(\log X-\log z)\log\log\log X}{\log\log X}+O\Bigl(\frac{\log X}{\log\log X}\Bigr)\Bigr),
$$
and taking into account the multiplicity of the values of $\gamma(n)$, which was bounded uniformly above, the result follows.
\end{proof}

\section{Examples of new applications}

\subsection{The number of prime factors of consecutive integers}

Let $\delta > 0$ and $k \ge 1$ be fixed. Conjecture \ref{conj:1} implies that
$$
\omega(n(n+k)) \le (1+\delta)\frac{\log n}{\log\log n}
$$
for all $n \ge n_{0}(k,\delta)$. 

First, we observe that, for any fixed $0 < \delta \le c \le 4$, we have
\begin{equation}\label{varpi}
\prod_{i \le c\frac{\log n}{\log\log n}}p_{i} = \exp\bigl((c+o_{\delta}(1))\log n\bigr)
\end{equation}
which follows from the prime number theorem.

Now, assuming for a contradiction that $\omega(n(n+k)) > (1+\delta)\frac{\log n}{\log\log n} =: \varpi$ and applying Conjecture \ref{conj:1} to the equation
$$
\frac{n}{\gcd(n,k)}+\frac{k}{\gcd(n,k)}=\frac{n+k}{\gcd(n,k)}
$$
we get
\begin{eqnarray*}
n & \le & \gcd(n,k)C(\epsilon)H\left(\frac{kn(n+k)}{\gcd(n,k)^{3}}\right)^{1+\epsilon}\\
& \le & C_{1}(\epsilon,k)\frac{n^{2+2\epsilon}}{\bigl(\log\prod_{i \le \varpi-2\omega(k)}p_{i}\bigr)^{(\varpi-2\omega(k))(1+\epsilon)}}\\
& \le & C_{1}(\epsilon,k)\frac{n^{2+2\epsilon}}{((1+\delta+o_{\delta,k}(1))\log n)^{(\varpi-2\omega(k))(1+\epsilon)}}\\
& \le & C_{1}(\epsilon,k)\frac{n^{2+2\epsilon}\exp\bigl((1+\epsilon)c_{2}(\delta,k)\frac{\log n}{\log\log n}\bigr)}{(\log n)^{\varpi(1+\epsilon)}}\\
& = & C_{1}(\epsilon,k)n^{(1-\delta)(1+\epsilon)}\exp\Bigl((1+\epsilon)c_{2}(\delta,k)\frac{\log n}{\log\log n}\Bigr).
\end{eqnarray*}
By taking $\epsilon > 0$ sufficiently small, we obtain a contradiction for all sufficiently large $n$, which completes the proof.

Essentially the same argument shows that if $a,b,c \in \mathbb{N}$ satisfy $a+b=c$ and $\gcd(a,b,c)=1$, then
$$
\omega(abc) \le (2+\delta)\frac{\log c}{\log\log c}
$$
for every fixed $\delta > 0$ and all $c \ge c_{0}(\delta)$. This is best possible, as can be seen in the proof of Proposition \ref{prop:3} below.

\subsection{Sum of two powers}

Let $\delta > 0$ be fixed. Let $n = x^{a} + y^{b}$ with $x,y,a,b \in \mathbb{N}$ and $\gcd(x,y)=1$. Then, for all sufficiently large $n > n_{0}(\delta)$, we have
\begin{equation}\label{sum_pow}
\omega(n) \le \Bigl(\frac{1}{a}+\frac{1}{b}+\delta\Bigr)\frac{\log n}{\log\log n},
\end{equation}
which is interesting only when $\frac{1}{a}+\frac{1}{b} < 1$. As before, we assume for a contradiction that $\omega(n) > c\frac{\log n}{\log\log n} =: \varpi$ with $c=\frac{1}{a}+\frac{1}{b}+\delta$. Then
\begin{eqnarray*}
n & \le & C(\epsilon)H(xyn)^{1+\epsilon} \le C(\epsilon)\frac{(xyn)^{1+\epsilon}}{(\log\prod_{i \le \varpi}p_{i})^{\varpi(1+\epsilon)}}\\
& \le & C(\epsilon)\frac{(xyn)^{1+\epsilon}}{((c+o_{\delta}(1))\log n)^{\varpi(1+\epsilon)}} \le C(\epsilon)\frac{(xyn)^{1+\epsilon}\exp\bigl(c_{2}(\delta)\frac{\log n}{\log\log n}\bigr)}{(\log n)^{\varpi(1+\epsilon)}}\\
& = & C(\epsilon)(xyn^{1-c})^{1+\epsilon}\exp\Bigl(c_{2}(\delta)\frac{\log n}{\log\log n}\Bigr) \le C(\epsilon)(n^{1-\delta})^{1+\epsilon}\exp\Bigl(c_{2}(\delta)\frac{\log n}{\log\log n}\Bigr),
\end{eqnarray*}
where we used \eqref{varpi} and the fact that $n = x^{a} + y^{b}$ implies $xy \le n^{1/a+1/b}$. By taking $\epsilon > 0$ sufficiently small, we obtain a contradiction for all sufficiently large $n$, thereby completing the proof of \eqref{sum_pow}. More generally, we obtain the same conclusion if we assume $n = x+y$, where the indices of composition satisfy $\lambda(x) \ge a$ and $\lambda(y) \ge b$.

The special case of Mersenne numbers, $M_{k} = 2^{k}-1$, which is only marginally different, is particularly interesting. Indeed, there is a structural reason why $2^{k}-1$ has many prime factors: if $d \mid k$, then $2^{d}-1 \mid 2^{k}-1$. Using the primitive divisor theorem (see Theorem 2.3.1 in \cite{fl}), one further deduces that $2^{k}-1$ has many distinct prime factors. In fact, since only $1 = 2^{1}-1$ and $63 = 2^{6}-1$ fail to have a primitive divisor, it follows that
$$
\omega(2^{k}-1) \ge \tau(k)-2
$$
for all $k \ge 1$, where $\tau$ denotes the divisor function. Moreover, for some large values of $k$ one has
\begin{eqnarray}\nonumber
\tau(k) & \ge & 2^{\omega(k)} \ge \exp\Bigl((\log 2+o(1))\frac{\log k}{\log\log k}\Bigr)\\ \label{tau_min}
& \ge & \exp\Bigl((\log 2+o(1))\frac{\log\log M_{k}}{\log\log\log M_{k}}\Bigr).
\end{eqnarray}
Applying the above argument, we obtain
$$
\omega(M_{k}) \le \delta\frac{\log M_{k}}{\log\log M_{k}}
$$
for every fixed $\delta > 0$ and all $k \ge k_{0}(\delta)$. This sequence provides a rich source of strong test cases for Conjecture \ref{conj:1} and, in particular, forces the constant $C(\epsilon)$ to be very large for small $\epsilon$.

\subsection{Distance between atypical integers}

Let $\delta > 0$ be fixed, and let $m$ and $n$ be two coprime positive integers. The abc conjecture classically implies
$$
|n-m| \gg \max(n,m)^{1-\frac{1}{\lambda(n)}-\frac{1}{\lambda(m)}-\delta}.
$$
Assuming Conjecture \ref{conj:1}, we also have
$$
|n-m| \gg \max(n,m)^{\vartheta(n)+\vartheta(m)-1-\delta}
$$
and
$$
|n-m| \gg \max(n,m)^{\vartheta(n)-\frac{1}{\lambda(m)}-\delta}.
$$
Moreover, if $\max_{p \mid mn}p \le \frac{\delta}{2}\log\max(n,m)$, then
$$
|n-m|^{\frac{1}{\lambda(|n-m|)}-\vartheta(|n-m|)} \gg \max(n,m)^{1-\delta}.
$$
The proofs of these statements are relatively straightforward. The implied constants all depend on $\delta$.

In particular, integers of the form $n=1+\prod_{i \le k}p_{i}$, appearing in Euclid's proof of the infinitude of primes, would lie $\gg n^{1/2-\delta}$ away from every perfect power.

\section{Proof of Proposition \ref{prop:2}}

For $k = 0$, the only possibility is $H(1)=1$. For $k \ge 1$, it suffices to consider the case where $n$ is squarefree. We observe that the function $z \to \frac{z}{(\log z)^{k}}$ is increasing for $z \ge e^{k}$. We deduce, on the one hand, that the minimum for $k = 1$ is attained at $n = 3$, and on the other hand that for all $k \ge 2$ it is attained at $n_{k}$, where
$$
n_{k} := \prod_{i \le k} p_{i} \quad (k \ge 1).
$$
The prime number theorem implies
$$
\theta(x) := \sum_{p \le x} \log p = x + O\Bigl(\frac{x}{(\log x)^{3}}\Bigr) \quad (x \ge 2)
$$
as well as
$$
p_{k} = k\Bigl(\log k + \log\log k - 1 + \frac{\log\log k-2}{\log k}+O\Bigl(\frac{(\log\log k)^{2}}{(\log k)^2}\Bigr)\Bigr) \quad (k \ge 2),
$$
see \cite[p.~183]{hlm:rcv}. Writing $\theta(p_{k}) =: p_{k} + e_{k}$, we obtain
\begin{eqnarray}\nonumber
H(n_{k}) &=& \frac{n_{k}}{(\log n_{k})^{k}} = \frac{e^{\theta(p_k)}}{\theta(p_k)^k}
= \exp\bigl(p_{k} + e_{k} - k\log(p_{k}+e_{k})\bigr)\\ \label{prop:2:eq}
& = & e^{-k -\frac{k}{\log k}+O\bigl(\frac{k(\log\log k)^{2}}{(\log k)^{2}}\bigr)}.
\end{eqnarray}
This completes the proof of Proposition \ref{prop:2}.

\section{Proof of Proposition \ref{prop:3}}

Let $A$ be a sufficiently large number and consider the integer
$$
N := \prod_{p \le 2\log A}p.
$$
From Lemma \ref{lem:2}, we have
\begin{equation}\label{prop:3:eq}
\omega(N) = \frac{2\log A}{\log\log A}+(2-2\log 2)\frac{\log A}{(\log\log A)^{2}}+O\Bigl(\frac{\log A}{(\log\log A)^{3}}\Bigr)
\end{equation}
and, by the prime number theorem,
\begin{equation}
\log N = 2\log A+O\Bigl(\frac{\log A}{(\log\log A)^{2}}\Bigr).
\end{equation}

We can factor $N = ab$ into two integers of size $N^{1/2}(\log A)^{O(1)}$. We then consider the equation $a+b=c$ for which we clearly have $\gcd(a,b,c)=1$. From \eqref{prop:2:eq} and \eqref{prop:3:eq}, 
\begin{eqnarray*}
H(abc) & \le & c \cdot H(ab) = c \cdot \exp\Bigl(-\omega(N)-\frac{\omega(N)}{\log \omega(N)}+O\Bigl(\frac{\omega(N)(\log\log \omega(N))^{2}}{(\log \omega(N))^{2}}\Bigr)\Bigr)\\
& = & c \cdot \exp\Bigl(-\frac{2\log A}{\log\log A}-\frac{(4-2\log 2)\log A}{(\log\log A)^{2}}+O\Bigl(\frac{(\log A)(\log\log\log A)^{2}}{(\log\log A)^{3}}\Bigr)\Bigr)\\
& = & c \cdot \exp\Bigl(-\frac{2\log c}{\log\log c}-\frac{(4-2\log 2)\log c}{(\log\log c)^{2}}+O\Bigl(\frac{(\log c)(\log\log\log c)^{2}}{(\log\log c)^{3}}\Bigr)\Bigr)\\
& < & c \cdot \exp\Bigl(-\frac{2\log c}{\log\log c}\Bigr).
\end{eqnarray*}
Since the function $x \to \frac{\log x}{\log\log x}$ is increasing for $x \ge e^{e}$, it follows that
$$
c > H\exp\Bigl(\frac{2\log H}{\log\log H}\Bigr),
$$
as desired.

\section{Proofs of the theorems}

\subsection{Construction of certain polynomials}

Let us first recall the proof of Mason-Stothers theorem. Set $\Delta = ab'-a'b$, the Wronskian. If $\Delta = 0$, then $ab'=a'b$, and from $\gcd(a,b)=1$ we deduce that $a'=0$ and hence $b'=0$, so that $c'=a'+b'=0$. Thus $\Delta \neq 0$. Moreover, we observe that $\frac{abc}{\mathrm{Rad}(abc)} \mid \Delta$. Comparing degrees, we obtain
$$
\deg a+\deg b+\deg c-\deg \mathrm{Rad}(abc) \le \deg a+\deg b-1,
$$
which gives
$$
\deg c \le \deg \mathrm{Rad}(abc)-1.
$$
Since the right-hand side is symmetric, the result follows by considering the other possible Wronskians.

From now on, we focus on the case where the inequality \eqref{mason} holds with equality. First, note that it is then impossible to have $\deg a=\deg b=\deg c$. Indeed, equality can occur only if
$$
\deg a+\deg b+\deg c-\deg \mathrm{Rad}(abc) = \deg \Delta = \deg a+\deg b-1,
$$
but if $\deg a = \deg b$, then one checks directly that $\deg \Delta \le \deg a+\deg b-2$. Thus, in the case of equality, two of the polynomials have the same degree and the third has strictly smaller degree. We may assume that $0 \le \deg c < \deg a = \deg b$. In this case, we have $\deg (cb'-c'b) = \deg b+\deg c-1$.

From $\frac{abc}{\mathrm{Rad}(abc)} \mid cb'-c'b = \Delta$, we deduce that $C_{1}\cdot\frac{abc}{\mathrm{Rad}(abc)} = \Delta$ and hence
$$
C_{1} \cdot \frac{c}{\mathrm{Rad}(c)} = \mathrm{Rad}(ab)\Bigl(\frac{b'}{b}-\frac{a'}{a}\Bigr).
$$
Now, a case of equality does not necessarily require $c$ to be squarefree, as shown by the example
$$
3(z^{2}-3)^{2}-(3z-5)(z-3)^{3}=4(2z-3)^{3}
$$
which appears in \cite{ml}. Since we are only interested in finding a solution, we may restrict our search to solutions for which $\frac{c}{\mathrm{Rad}(c)}$ is constant. We are thus led to the equation
\begin{equation}\label{equality_C_2}
C_{2} = \mathrm{Rad}(ab)\Bigl(\frac{a'}{a}-\frac{b'}{b}\Bigr).
\end{equation}
Let $a_{1} < \cdots < a_{k}$ be fixed integers. We are interested in polynomials of the type
\begin{eqnarray}\nonumber
a & := & \prod_{j = 1}^{k}(z+a_{j})^{m_{j}},\\ \label{ab}
b & := & -\prod_{j = 1}^{k}(z+a_{j})^{n_{j}},
\end{eqnarray}
with $m_{j}+n_{j} \ge 1$ and $m_{j}n_{j}=0$ in order to ensure that $\gcd(a,b)=1$. The equation \eqref{equality_C_2} can be rewritten as
\begin{equation}\label{poly_id}
\sum_{j=1}^{k}\frac{m_{j}}{z+a_{j}}-\sum_{j=1}^{k}\frac{n_{j}}{z+a_{j}} = \frac{C_{2}}{\prod_{j = 1}^{k}(z+a_{j})}.
\end{equation}

We now consider the series expansion
$$
\frac{1}{z+a_{j}} = \sum_{s \ge 0}\frac{(-1)^{s}a^{s}_{j}}{z^{s+1}} \quad (1 \le j \le k).
$$
Equating coefficients on both sides of the resulting equation and setting $u_{j} := (-1)^{k-1}\frac{(m_{j}-n_{j})}{C_{2}}$, we obtain the linear system
\begin{eqnarray*}
u_{1}+\cdots+u_{k} & = & 0\\
u_{1}a_{1}+\cdots+u_{k}a_{k} & = & 0\\
& \vdots & \\
u_{1}a^{k-1}_{1}+\cdots+u_{k}a^{k-1}_{k} & = & 1.
\end{eqnarray*}

Using Cramer's rule and the Vandermonde determinant formula, we obtain the solution
$$
u_{j} = (-1)^{k-j}\frac{D_{j}}{D},
$$
where
$$
D := \prod_{1 \le r < s \le k}(a_{s}-a_{r})
$$
and
$$
D_{j} := \prod_{\substack{1 \le r < s \le k \\ r,s \neq j}}(a_{s}-a_{r}).
$$
By assumption, we have $a_{r} < a_{s}$ for $r < s$ and hence each $D_{j}$ and $D$ is a positive integer. Since we want $m_{j}+n_{j} \ge 1$ and $m_{j}n_{j}=0$, we are naturally led to the choice
$$
m_{j} = \mathbf{1}_{2 \mid j}\frac{D_{j}}{e}, \quad n_{j} = \mathbf{1}_{2 \nmid j}\frac{D_{j}}{e} \quad (j=1,\dots,k),
$$
where $e := \gcd(D_{1},\dots,D_{k})$. From now on, we assume that $a$ and $b$ in \eqref{ab} have these exponents.

Observe that, in general, any linear combination of the form
$$
\sum_{j=1}^{k}\frac{\lambda_j}{z+a_j} \qquad(\lambda_{1},\dots,\lambda_{k} \in \mathbb{C})
$$
can be written uniquely as
$$
\frac{p(z)}{\prod_{j=1}^{k}(z+a_j)},
$$
where $p(z)$ is a polynomial satisfying $0\leq\deg p\leq k-1$. It follows that the polynomials $a$ and $b$ satisfy the identity \eqref{poly_id}. Moreover,
$$
ab\left(\frac{a'}{a}-\frac{b'}{b}\right) = ba'-b'a = bc'-b'c.
$$
Comparing degrees, we obtain $2\deg a-k=\deg a+\deg c-1$, and hence $\deg c=\deg a-k+1$. By the Mason-Stothers inequality,
$$
\deg a \le \deg \mathrm{Rad}(abc)-1 = \deg \mathrm{Rad}(c)+k-1 \le \deg c+k-1.
$$
Thus, we have constructed an equality case with $c$ squarefree. Summarizing, we obtain the following lemma.

\begin{lem}\label{lem:6}
Let $a_{1} < \cdots < a_{k}$ be fixed integers with $k \ge 2$. Consider the polynomials
\begin{align*}
P(z) & := \prod_{\substack{j=1 \\ 2 \mid j}}^{k}(z+a_{j})^{\frac{D_{j}}{e}},\\
Q(z) & := \prod_{\substack{j=1 \\ 2 \nmid j}}^{k}(z+a_{j})^{\frac{D_{j}}{e}}
\end{align*}
and $R(z) := P(z)-Q(z)$. Then these polynomials lie in $\mathbb{Z}[z]$ and satisfy
$$
\deg P = \deg Q = \sum_{j=1}^{k}\frac{D_{j}}{2e} \quad \text{and} \quad \deg R = \sum_{j=1}^{k}\frac{D_{j}}{2e}-k+1.
$$
\end{lem}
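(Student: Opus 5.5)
The plan is to read off every claim in Lemma~\ref{lem:6} from the construction just carried out. The only genuinely new input is an explicit leading-coefficient computation for a Wronskian. It replaces the appeal to the Mason--Stothers inequality, so that we get the exact degree of $R$ rather than only an upper bound.

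\textbf{Step 1 (integrality and exponents).} The $a_j$ are integers with $a_1<\dots<a_k$, so each $D_j$ is a positive integer. Hence each $D_j/e$ is a positive integer, and $P,Q$ are monic polynomials in $\mathbb{Z}[z]$. Their difference $R$ is then also in $\mathbb{Z}[z]$. Moreover $P$ and $Q$ have no common root, since the $a_j$ are distinct and each index $j$ occurs in exactly one of them.

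\textbf{Step 2 (equal degrees).} We have $\deg P=\sum_{2\mid j}D_j/e$ and $\deg Q=\sum_{2\nmid j}D_j/e$. The first equation $u_1+\dots+u_k=0$ of the linear system, with $u_j=(-1)^{k-j}D_j/D$, gives $\sum_j(-1)^jD_j=0$. So the two partial sums agree, and each equals half the total, namely $d:=\sum_{j}D_j/(2e)$.

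\textbf{Step 3 (the Wronskian).} Set $\lambda_j:=(-1)^jD_j/e$; with the choice of $m_j,n_j$ made above, $\lambda_j=m_j-n_j$. Then
\[
\frac{P'}{P}-\frac{Q'}{Q}=\sum_{j=1}^k\frac{\lambda_j}{z+a_j}.
\]
By the Cramer/Vandermonde solution of the system, the expansion of the right-hand side in powers of $1/z$ has vanishing coefficients of $z^{-1},\dots,z^{-(k-1)}$. Its coefficient of $z^{-k}$ is a nonzero multiple of $\sum_j u_ja_j^{k-1}=1$. Writing the right-hand side uniquely as $p(z)/\prod_j(z+a_j)$ with $\deg p\le k-1$ therefore forces $p$ to be a nonzero constant $C_2$. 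Multiplying by $PQ$ gives
\[
QP'-PQ'=\frac{C_2\,PQ}{\prod_{j}(z+a_j)},
\]
a nonzero polynomial of degree exactly $2d-k$.

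\textbf{Step 4 (degree of $R$).} Since $P=Q+R$, we have $QP'-PQ'=QR'-RQ'$. Now $P$ and $Q$ are monic of the same degree $d\ge1$, so $r:=\deg R<d$. Also $R\neq0$, because $P=Q$ is impossible for coprime polynomials of positive degree. Write $\ell$ for the leading coefficient of $R$. The top term of $QR'-RQ'$ is $(r-d)\ell z^{d+r-1}$, which is nonzero since $r\neq d$; this also covers $r=0$, where $QR'-RQ'=-RQ'$. Hence $\deg(QR'-RQ')=d+r-1$. Comparing with Step 3 gives $d+r-1=2d-k$, that is $\deg R=d-k+1$, as claimed.

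The main point requiring care is Step 3. One must check that the partial-fraction sum really collapses to a nonzero constant over $\prod_j(z+a_j)$, and that the signs of $\lambda_j$ match those of the $u_j$ up to a common nonzero factor. Once this is verified, Step 4 is a routine leading-term computation.
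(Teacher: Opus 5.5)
Your proof is correct and follows the paper's own route. The paper also gets $\deg R$ exactly, by comparing $\deg(QP'-PQ')=2d-k$ (from the partial-fraction identity) with $\deg(QR'-RQ')=d+r-1$, and invokes Mason--Stothers only to conclude that $R$ is squarefree, which is not part of the lemma; so your leading-term computation does not replace Mason--Stothers, but it does make explicit two points the paper leaves implicit, namely $\deg P=\deg Q$ via $\sum_j u_j=0$ and the nonvanishing top coefficient $(r-d)\ell$.
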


\begin{rem}\label{Gcd}
For a fixed $x \in \mathbb{Z}$, let $G(x) := \gcd(P(x),Q(x))$. We establish a uniform bound in $x \in \mathbb{Z}$
\begin{equation}
G(x) \le \prod_{2 \le p \le a_{k}-a_{1}} p^{\beta_{p}} \quad \text{where} \quad \beta_{p} := k \max\Bigl(\frac{D_{1}}{e},\dots,\frac{D_{k}}{e}\Bigr)\Bigl\lfloor \frac{\log(a_{k}-a_{1})}{\log p} \Bigr\rfloor.
\end{equation}
The idea is to bound the largest power of $p$ that can divide at least two of the integers $x+a_{j}$ with $j = 1, \dots, k$. We then multiply by the maximal exponent of a single term to bound its total contribution to the $\gcd$. The result follows from the fact that both polynomials $P(z)$ and $Q(z)$ involve at most $k$ linear factors of the form $z+a_{j}$.
\end{rem}

The special case $a_{1}=1,\dots,a_{k}=k$ is particularly interesting, as shown in the following result.

\begin{lem}\label{lem:7}
For each integer $k \ge 2$, consider the polynomials
\begin{align*}
P_{k}(z) & := \prod_{\substack{j=1 \\ 2 \mid j}}^{k}(z+j)^{\binom{k-1}{j-1}},\\
Q_{k}(z) & := \prod_{\substack{j=1 \\ 2 \nmid j}}^{k}(z+j)^{\binom{k-1}{j-1}}
\end{align*}
and $R_{k}(z) := P_{k}(z)-Q_{k}(z)$. Then these polynomials lie in $\mathbb{Z}[z]$ and satisfy
$$
\deg P_{k} = \deg Q_{k} = 2^{k-2} \quad \text{and} \quad \deg R_{k} = 2^{k-2}-k+1.
$$
\end{lem}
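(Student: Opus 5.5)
This is the specialization of Lemma \ref{lem:6} to $a_{j}=j$ for $j=1,\dots,k$. So the plan is to compute $D_{j}$, $D$ and $e$ explicitly and then read off the exponents and degrees from Lemma \ref{lem:6}.

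\emph{Computing $D$ and $D_{j}$.} With $a_{s}-a_{r}=s-r$ we get
$$
D=\prod_{1\le r<s\le k}(s-r)=\prod_{s=1}^{k}(s-1)!.
$$
For $D_{j}$ we remove from $D$ every factor involving the index $j$. These factors are $\prod_{r<j}(j-r)=(j-1)!$ and $\prod_{s>j}(s-j)=(k-j)!$. Hence
$$
D_{j}=\frac{D}{(j-1)!\,(k-j)!}=\frac{D}{(k-1)!}\binom{k-1}{j-1}.
$$

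\emph{Computing $e$.} We have $e=\gcd(D_{1},\dots,D_{k})=\frac{D}{(k-1)!}\gcd_{j}\binom{k-1}{j-1}$. Taking $j=1$ gives $\binom{k-1}{0}=1$, so the gcd of the binomial coefficients is $1$. Therefore $e=D/(k-1)!$, and
$$
\frac{D_{j}}{e}=\binom{k-1}{j-1}.
$$
These are exactly the exponents in $P_{k}$ and $Q_{k}$, so $P_{k}=P$ and $Q_{k}=Q$ in the notation of Lemma \ref{lem:6}. Integrality is immediate, since every factor is monic with integer coefficients.

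\emph{Degrees.} Lemma \ref{lem:6} gives
$$
\deg P=\deg Q=\sum_{j=1}^{k}\frac{D_{j}}{2e}=\frac{1}{2}\sum_{j=1}^{k}\binom{k-1}{j-1}=\frac{2^{k-1}}{2}=2^{k-2},
$$
and $\deg R=2^{k-2}-k+1$.

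As an independent check that $\deg P_{k}=\deg Q_{k}$: the even-$j$ and odd-$j$ exponent sums are equal because the alternating sum $\sum_{i}(-1)^{i}\binom{k-1}{i}$ vanishes for $k\ge2$.

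There is no real obstacle here. The only step needing care is identifying $D_{j}$ as $D/((j-1)!(k-j)!)$, by correctly counting the removed Vandermonde factors. If one prefers not to rely on the gcd normalization, the system in the construction preceding Lemma \ref{lem:6} can be checked directly through the partial-fraction identity
$$
\sum_{j}\frac{(-1)^{k-j}\binom{k-1}{j-1}}{z+j}=\frac{(k-1)!}{\prod_{j}(z+j)}.
$$
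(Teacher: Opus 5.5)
Your proposal is correct and follows exactly the route the paper intends. Lemma~\ref{lem:7} is stated, without separate proof, as the specialization $a_j=j$ of Lemma~\ref{lem:6}, and your computation $D_j=D/((j-1)!(k-j)!)$, $e=D_1=D/(k-1)!$, hence $D_j/e=\binom{k-1}{j-1}$ with $\sum_j\binom{k-1}{j-1}=2^{k-1}$, is precisely what that specialization requires.

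One small slip is in your optional closing aside: the partial-fraction identity is off by the sign $(-1)^{k-1}$. For $k=2$ its left side equals $-1/((z+1)(z+2))$, not $+1/((z+1)(z+2))$. The correct numerators are $(-1)^{j-1}\binom{k-1}{j-1}$. This does not affect the argument, since only the nonvanishing of the constant on the right matters.
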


\subsection{Proof of Theorem \ref{thm:1}}

Let $\delta > 0$ be fixed. The case $y=1$ is a well-known consequence of the prime number theorem. For some fixed $y \ge 2$, we assume for a contradiction that there exists a sufficiently large integer $x$ such that $W(x,y) > (1+\delta)\frac{\log x}{\log\log x} =: \varpi$. By Lemma \ref{lem:7}, we have
$$
P_{y}(x)-Q_{y}(x) = R_{y}(x).
$$
We let $G_{y}(x) := \gcd(P_{y}(x),Q_{y}(x))$ and keep in mind Remark \ref{Gcd}. Suppose that the largest coefficient of the polynomial $R_{y}$ is bounded by $M_{y}$. We apply Conjecture \ref{conj:1} to the equation
$$
\frac{P_{y}(x)}{G_{y}(x)}-\frac{Q_{y}(x)}{G_{y}(x)} = \frac{R_{y}(x)}{G_{y}(x)} \quad (c-a= \pm b,\ \text{say})
$$
to obtain
\begin{eqnarray*}
x^{2^{y-2}} & \le & P_{y}(x) \le G_{y}(x)C(\epsilon)\frac{\gamma(abc)^{1+\epsilon}}{(\log\gamma(abc))^{\omega(abc)(1+\epsilon)}}\\
& \le & C_{1}(\epsilon,y)\frac{(2^{y}M_{y}x^{2^{y-2}+1})^{1+\epsilon}}{(\log \gamma(ac))^{\omega(ac)(1+\epsilon)}}\\
& \le & C_{2}(\epsilon,y)\frac{(x^{2^{y-2}+1})^{1+\epsilon}}{\bigl(\log \prod_{i \le \varpi-\pi(y)} p_{i}\bigr)^{(\varpi-\pi(y))(1+\epsilon)}}\\
& \le & C_{2}(\epsilon,y)\frac{(x^{2^{y-2}+1})^{1+\epsilon}}{((1+\delta+o_{\delta,y}(1))\log x)^{(\varpi-\pi(y))(1+\epsilon)}}\\
& \le & C_{2}(\epsilon,y)\frac{(x^{2^{y-2}+1})^{1+\epsilon}}{(\log x)^{\frac{(1+\delta)\log x}{\log\log x}(1+\epsilon)}}\exp\Bigl(c_{2}(\delta,y)(1+\epsilon)\frac{\log x}{\log\log x}\Bigr)\\
& = & C_{2}(\epsilon,y)(x^{2^{y-2}-\delta})^{1+\epsilon}\exp\Bigl(c_{2}(\delta,y)(1+\epsilon)\frac{\log x}{\log\log x}\Bigr),
\end{eqnarray*}
where we used \eqref{varpi}. By taking $\epsilon > 0$ sufficiently small, we obtain a contradiction for all $x \ge x_{0}(\delta,y)$. This completes the proof of Theorem \ref{thm:1}.

\subsection{Proof of Theorem \ref{thm:2}}

Let $\mathcal{A}$ be a fixed set of $k \ge 1$ distinct integers. Let also $x > 0$ be a solution of the system $\mathcal{S}$ with given $n_{i}$. Since the result is trivial for small $N$, we may assume that $N$ is sufficiently large and hence that $x$ is arbitrarily large. The case $k = 1$ follows from Proposition \ref{prop:2}, so we may assume that $k \ge 2$. By Lemma \ref{lem:6}, we have
$$
P(x)-Q(x) = R(x).
$$
Let $G(x) := \gcd(P(x),Q(x))$ for $x \in \mathbb{Z}$. Again, an upper bound for $G(x)$ is proven in Remark \ref{Gcd}. Suppose that the largest coefficient of the polynomial $R$ is bounded by $M$. We apply Conjecture \ref{conj:1} to the equation
$$
\frac{P(x)}{G(x)}-\frac{Q(x)}{G(x)} = \frac{R(x)}{G(x)} \quad (c-a = \pm b,\ \text{say})
$$
to obtain
\begin{eqnarray*}
\frac{x^{d}}{2^{d}} & \le & P(x) \le G(x)C(\epsilon)\frac{\gamma(abc)^{1+\epsilon}}{(\log\gamma(abc))^{\omega(abc)(1+\epsilon)}}\\
& \le & G(x)C(\epsilon)\frac{(2^{k}Mx^{d+1}/N)^{1+\epsilon}\gamma(N)^{1+\epsilon}}{(\log \gamma(N))^{\omega(N)(1+\epsilon)}}\\
& \le & C_{1}(\epsilon,\mathcal{A})x^{(d+1)(1+\epsilon)}\Bigl(\frac{H(N)}{N}\Bigr)^{1+\epsilon}.
\end{eqnarray*}
The result follows by taking $\epsilon$ sufficiently small for a given $0 < \delta < 1$.

\section{Discussion}

We believe that Conjecture \ref{conj:1} is stronger than the $abc$ conjecture. Moreover, if Conjecture \ref{conj:1} is false, it should be easier to disprove. Finally, one may envision the possibility of further improvements to Conjecture \ref{conj:1}, perhaps involving a function that takes into account the full factorization of $a$, $b$, and $c$. We conclude by formulating these ideas as concrete problems.

\begin{prob}
Does the $abc$ conjecture imply Conjecture \ref{conj:1}?
\end{prob}

\begin{prob}
Can one prove that Conjecture \ref{conj:1} is false?
\end{prob}

\begin{prob}
What is the ideal function for formulating an $abc$-type conjecture?
\end{prob}

\section*{Acknowledgements}

The author is grateful to Jean-Marie De Koninck for a careful reading of a previous version of this manuscript, which led to several improvements in the presentation.

{\sc Département de mathématiques et de statistique, Université Laval, Pavillon Alexandre-Vachon, 1045 Avenue de la Médecine, Québec, QC G1V 0A6} \\
{\it E-mail address:} {\tt Patrick.Letendre.1@ulaval.ca}

\end{document}